\newtheorem{theorem}{Theorem}
\newtheorem{corollary}{Corollary}
\newtheorem{lemma}{Lemma}
\newtheorem{proposition}{Proposition}
\theoremstyle{remark}
\newtheorem{remark}{Remark}
\theoremstyle{definition}
\newtheorem{define}{Definition}
\newtheorem{example}{Example}
\newtheorem*{conjecture}{Conjecture}
\newtheorem*{Acknowlegement}{Acknowlegement}
\begin{document}

\title[A note on uniqueness boundary]{A note on uniqueness boundary of holomorphic mappings}

\author{Ninh Van Thu and Nguyen Ngoc Khanh}
\begin{abstract}
In this paper, we prove Huang et al.'s conjecture stated that if $f$ is a holomorphic function on $\Delta^+:=\{z\in \mathbb C \colon |z|<1,~\mathrm{Im}(z)>0\}$ with $\mathcal{C}^\infty$-smooth extension up to $(-1,1)$ such that $f$ maps $(-1,1)$ into a cone $\Gamma_C:=\{z\in \mathbb C\colon |\mathrm{Im} (z)| \leq C|\mathrm{Re} (z)|\}$, for some positive number $C$, and $f$ vanishes to infinite order at $0$, then $f$ vanishes identically. In addition, some regularity properties of the Riemann mapping functions on the boundary and an example concerning Huang et al.'s conjecture are also given.
\end{abstract}

\thanks{This research is funded by the Vietnam National University, Hanoi (VNU) under project number QG.16.07.}
\address{Department of Mathematics, Vietnam National University at Hanoi, 334 Nguyen Trai, Thanh Xuan, Hanoi, Vietnam}
\email{thunv@vnu.edu.vn, khanh.mimhus@gmail.com}
\subjclass[2010]{Primary 32H12; Secondary 32A10}
\keywords{Riemann mapping function, infinite order, analytic cusp.}
\maketitle


\section{Introduction}
Let $\Omega$ be a domain in $\mathbb R^n$ with $a\in \partial\Omega$. A continuous function $f: \overline{\Omega}\to \mathbb C$ vanishes to infinite order at $a$ if, for every $N\in \mathbb N$,
$$
\lim_{\Omega\ni x\to a}\frac{f(x)}{|x-a|^N}=0. 
$$

In 1991, M. Lakner \cite{La91} proved the following result.  
\begin{theorem}[\cite{La91}] Suppose that $f$ is a function on the upper half disc $\Delta^+:=\{z\in \mathbb C\colon |z|<1,~\mathrm{Im}(z)>0\}$ that extends continuously to the diameter $(-1,1)$, such that the extension maps $(-1,1)$ to a cone $\Gamma_C:=\{z\in \mathbb C\colon |\mathrm{Im} (z)| \leq C|\mathrm{Re} (z)|\}$ for some $C>0$. If $f\mid_{(-1,1)}$ has an isolated zero at the origin, then $f$ vanishes to finite order at $0$.  
\end{theorem}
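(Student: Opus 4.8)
The plan is to argue by contradiction: assuming $f\not\equiv 0$, I will suppose that $f$ vanishes to infinite order at $0$ and derive a contradiction with the fact that the zero is isolated. Since $f\not\equiv 0$ its interior zeros are isolated, and since $0$ is an isolated zero of $f|_{(-1,1)}$ one can pick $\delta\in(0,1)$ with $f\neq 0$ on $(-\delta,\delta)\setminus\{0\}$; after a preliminary reduction (which I expect to require the hypotheses, in order to rule out interior zeros clustering at $0$) I will assume $f$ is zero-free on the half-disc $U:=\{z:|z|<\delta,\ \mathrm{Im}(z)>0\}$. On this simply connected, zero-free region a single-valued branch $g=\log f=u+iv$ is available, with $u=\log|f|$ and $v=\arg f$. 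Two boundary facts drive everything. First, since $f$ extends continuously to $(-1,1)$ with $f\to 0$ at $0$, we may shrink $\delta$ so that $|f|\le 1$, hence $u=\mathrm{Re}\,g\le 0$ on $U$. Second, the cone condition forces $v=\arg f$ to remain inside a fixed arc of length at most $2\arctan C<\pi$ along each of $(-\delta,0)$ and $(0,\delta)$ (so, in particular, $v$ cannot wind as $x\to 0$), while $v$ is bounded on the circular part of $\partial U$ by compactness; thus $v$ is bounded on all of $\partial U\setminus\{0\}$.

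The heart of the argument is to propagate this boundary bound on $v$ to the interior, i.e.\ to prove $|v|\le M'$ on all of $U$ for some constant $M'$. The one boundary point where $v$ is uncontrolled is the single point $0$, and this is exactly where a Phragm\'{e}n--Lindel\"{o}f argument enters: the a priori one-sided bound $\mathrm{Re}\,g\le 0$ (equivalently $|f|\le 1$) supplies the growth restriction at $0$ that legitimizes the maximum principle there. Concretely, under $w=\log z$ the region $U$ becomes a half-strip with $0$ as its infinite end, and $g$ becomes a holomorphic function on that half-strip with $\mathrm{Re}\le 0$ and with $\mathrm{Im}$ bounded on the two infinite edges; the Phragm\'{e}n--Lindel\"{o}f principle for a strip then yields the uniform bound $|v|\le M'$. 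I regard this as the main obstacle, both because it is where the cone hypothesis is genuinely used and because the exceptional boundary point $0$ must be treated carefully. The function $e^{-1/\sqrt z}$, which is bounded and vanishes to infinite order at $0$ but whose argument is unbounded on the negative reals, shows that without the cone (that is, without the boundary bound on $v$) the conclusion is false, so any correct argument must break down precisely here.

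Once $|v|\le M'$ on $U$, I finish by reducing to a function with positive real part. Setting $q:=\pi/(2(M'+1))$ and $R:=f^{q}=\exp(qg)$ (well defined since $g$ is), we get $|\arg R|=q|v|<\pi/2$ on $U$, so $R$ maps $U$ into the right half-plane $\{\mathrm{Re}>0\}$, while $|R|=|f|^{q}\to 0$ to infinite order at $0$. Transporting $R$ to the unit disc by a conformal map carrying $0$ to a boundary point (a biholomorphism near the smooth boundary point $0$, so that infinite-order vanishing is preserved), the Herglotz representation of a positive-real-part function forces $\mathrm{Re}\,R$ to decay at most linearly in the distance to the boundary as $0$ is approached nontangentially: a positive harmonic function given by the Poisson--Stieltjes integral of a nonzero positive measure cannot vanish faster than the boundary distance, and the degenerate case of the zero measure would make $R$, hence $f$, constant. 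This at-most-linear lower bound for $|R|\ge\mathrm{Re}\,R$ contradicts the super-polynomial decay of $|R|=|f|^{q}$, and the contradiction shows that $f$ vanishes to finite order at $0$. A secondary technical point to settle is the zero-free reduction of the first paragraph, which I expect to dispose of by a separate lemma showing that the cone prevents zeros from accumulating nontangentially at $0$.
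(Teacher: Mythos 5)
This theorem is quoted from Lakner; the paper does not reprove it, but Section~\ref{s2} reproduces and generalizes exactly Lakner's mechanism: the winding number $I(r)=\mathrm{Ind}(f\circ\gamma_r)=\frac{1}{2\pi}\big(\arg f(-r)-\arg f(r)\big)$ is bounded above because the cone pins down $\arg f$ on the two radii (Lemmas~\ref{le1}--\ref{le2}), while the identity $\frac{1}{\ln(1/r)}\int_r^1\frac{I(t)}{t}\,dt=\frac{1}{2\pi\ln(1/r)}\int_0^\pi\big(u(1,\theta)-u(r,\theta)\big)\,d\theta$ with $u=\log|f|$ forces this quantity to be unbounded under infinite-order vanishing (Lemma~\ref{le3}). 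Your architecture is genuinely different --- bound $\arg f$ on the boundary, propagate the bound to the interior, pass to $f^{q}$ with positive real part, and invoke Herglotz (i.e.\ Theorem~\ref{T2}) --- but as written it has two real gaps. First, the zero-free reduction: your argument needs a single-valued $\log f$ on a full half-disc, hence \emph{no} interior zeros near $0$, not merely no nontangential accumulation; the hypotheses do not obviously preclude interior zeros clustering at $0$ (the cone constrains $f$ only on the diameter), and it is not clear how to prove your deferred lemma without essentially running the main argument. Lakner's method never needs this reduction: interior zeros on $\gamma_{r_n}$ contribute the jumps $\kappa(n)$ in Lemma~\ref{le1}(i), which only drive $I$ downward in the telescoping identity (\ref{eq6}) and are harmless.

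Second, and more seriously, the Phragm\'en--Lindel\"of step cannot be invoked as a black box. The only a priori growth you extract from $\mathrm{Re}\,g\le 0$ is via Harnack applied to the positive harmonic function $-\mathrm{Re}\,g$, which gives $|g(z)|=O(1/|z|)$ in the half-disc, i.e.\ $O(e^{|x|})$ in your width-$\pi$ half-strip --- exactly the critical rate at which Phragm\'en--Lindel\"of for a strip fails for general harmonic functions. So the ``growth restriction at $0$'' you claim to get from $|f|\le 1$ is not strong enough to legitimize the maximum principle at the exceptional point. The intermediate claim is nevertheless true, but its proof must exploit the holomorphic coupling between $\mathrm{Re}\,g$ and $\mathrm{Im}\,g$ rather than treat $\mathrm{Im}\,g$ as an arbitrary harmonic function: with $M=\sup_{\partial U\setminus\{0\}}|\arg f|$ and $0<\lambda<\pi/(2M)$, the function $f^{\lambda}=e^{\lambda g}$ is bounded on $U$ and its real part has nonnegative continuous boundary values off the single point $0$; since a bounded harmonic function is the Poisson integral of its a.e.\ boundary values and one point is negligible, $\mathrm{Re}(f^{\lambda})\ge 0$ on all of $U$, and connectedness then confines $\lambda\arg f$ to one interval of $\{w:\cos w\ge 0\}$. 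Note that this repair already hands you a power of $f$ with positive real part, so your interior bound on $\arg f$ is not needed as a separate intermediate step: you can go straight to the Herglotz/Baouendi--Rothschild conclusion. With these two repairs the proposal becomes a correct proof, and an instructive alternative to the winding-number argument, but as stated both the zero-free reduction and the Phragm\'en--Lindel\"of appeal are unjustified.
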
 

It is known that the function 
$$
f(z)=\exp(-e^{i\pi/4}/\sqrt{z}) 
$$
is holomorphic on $\Delta^+$, extends $\mathcal{C}^\infty$-smoothly to $\overline{\Delta^+}$, and vanishes to infinite order at $0$ (see \cite{La91}). Hence, this example shows that the condition that $f$ maps $(-1,1)$ to a cone $\Gamma_C$ is essential.

M. Baouendi and L. Rothschild \cite{BR93} obtained the following result in which the condition $f\mid_{(-1,1)}$ has an isolated zero at $0$ is not neccessary.  
\begin{theorem}[\cite{BR93}]\label{T2}
Let $f$ be a holomorphic function defined on the upper half-disk $\Delta^+$ that  is $\mathcal{C}^0$  up to $(-1,1)\subset \partial\Delta^+$. Assume that $\mathrm{Re}f(x)\geq 0$ for every $x=\mathrm{Re} (z)\in (-1,1)$. Then $f$ has the boundary unique continuation property in the sense that if $f$ vanishes to infinite order at $0$, then $f = 0$.

\end{theorem}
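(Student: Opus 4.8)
The plan is to argue by contradiction: assume $f\not\equiv 0$ and show that the positivity $\mathrm{Re}\,f\ge 0$ on $(-1,1)$ is incompatible with vanishing to infinite order at $0$. Since the conclusion is local and $\Delta^+$ is connected, I first restrict to a small half-disc $\Delta_\delta^+=\{|z|<\delta,\ \mathrm{Im}\,z>0\}$ with $\delta<1$. On such a half-disc $f$ is holomorphic, and because the semicircular arc $\{|z|=\delta,\ \mathrm{Im}\,z>0\}$ together with its endpoints lies in $\Delta^+\cup(-1,1)$, the function $f$ extends continuously to $\overline{\Delta_\delta^+}$ and is in particular bounded there. The idea is to convert the fast decay of $f$ at $0$ into fast growth of $1/f$, and then to contradict that growth using the boundary positivity.

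The key step is to manufacture, out of $1/f$, a holomorphic function with nonnegative real part on all of $\Delta_\delta^+$. On the diameter $I_\delta:=(-\delta,\delta)$ the hypothesis $\mathrm{Re}\,f\ge 0$ gives $\mathrm{Re}(1/f)=\mathrm{Re}(f)/|f|^2\ge 0$. On the curved part $S_\delta:=\{|z|=\delta,\ \mathrm{Im}\,z\ge 0\}$ I instead use boundedness: choosing the radius $\delta$ so that the compact arc $\overline{S_\delta}$ contains no zero of $f$ (possible for almost every $\delta$, since the zeros of $f\not\equiv 0$ have negligible radii), I get $|f|\ge m>0$ and hence $|1/f|\le 1/m$ on $\overline{S_\delta}$. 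Thus $\mathrm{Re}(1/f)$ is bounded below on all of $\partial\Delta_\delta^+$ (by $0$ on $I_\delta$ and by $-1/m$ on $S_\delta$), so by the minimum principle the harmonic function $\mathrm{Re}(1/f)$ is bounded below by $-1/m$ throughout $\Delta_\delta^+$, and $P:=1/f+1/m$ is a holomorphic function on $\Delta_\delta^+$ with $\mathrm{Re}\,P\ge 0$.

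Now I invoke the classical growth bound for functions of nonnegative real part. Composing with a Riemann map $\varphi:\Delta_\delta^+\to\{|\zeta|<1\}$ that sends the interior boundary point $0$ of the analytic arc $I_\delta$ to $1$ (so that $\varphi$ is conformal at $0$ and $1-|\varphi(iy)|\asymp y$), the function $P\circ\varphi^{-1}$ maps the unit disc into the right half-plane and therefore satisfies $|P\circ\varphi^{-1}(\zeta)|\le \mathrm{const}\cdot\frac{1+|\zeta|}{1-|\zeta|}$. Transported back, this reads $|P(iy)|\lesssim 1/y$, whence $|f(iy)|\ge c\,y$ for all small $y>0$ and some $c>0$. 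This contradicts vanishing to infinite order, which forces $|f(iy)|=o(y)$; hence the assumption $f\not\equiv 0$ is untenable and $f\equiv 0$.

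The step I expect to be the genuine obstacle is the passage from $\mathrm{Re}(1/f)\ge 0$ on the boundary to $\mathrm{Re}\,P\ge 0$ in the interior, because it tacitly requires $1/f$ to be holomorphic on $\Delta_\delta^+$, i.e. $f$ to be zero-free there. Boundary zeros of $f$ on $I_\delta$ cannot fill a set of positive measure—otherwise the bounded holomorphic $f$ would vanish identically by the F. and M. Riesz boundary uniqueness theorem—so good radii $\delta$ exist; but interior zeros accumulating at $0$ are a priori compatible with the hypotheses and are precisely what must be excluded, and a naive Blaschke factorization destroys the positivity on $I_\delta$. This is where the full strength of $\mathrm{Re}\,f\ge 0$ on $I_\delta$ must be used, and where the two-sidedness of the interval is essential: it is exactly the unbounded growth of $\arg f$ on one side of $0$ that lets the example $\exp(-e^{i\pi/4}/\sqrt z)$ vanish to infinite order, and ruling out such behaviour—equivalently, controlling both the zeros and the argument of $f$ near $0$ on the two sides of the origin—is the crux handled by the generalized complex Hopf lemma underlying Theorem \ref{T2}.
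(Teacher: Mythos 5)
The paper does not actually prove this statement: Theorem \ref{T2} is quoted from Baouendi--Rothschild \cite{BR93} and used as a black box, so there is no in-paper proof to compare against. Judged on its own, your argument has a fatal gap, and it is exactly the one you flag at the end but do not repair. The step ``$\mathrm{Re}(1/f)\ge 0$ on $I_\delta$ and $\ge -1/m$ on $S_\delta$, hence $\mathrm{Re}(1/f)\ge -1/m$ on $\Delta_\delta^+$ by the minimum principle'' is false as stated, even when $f$ is zero-free in $\Delta_\delta^+\setminus\{0\}$. The minimum principle you invoke requires the harmonic function to be continuous (or at least bounded below) up to \emph{all} of the boundary, whereas $1/f$ is necessarily unbounded near the boundary point $0$ --- indeed $|1/f(z)|\ge |z|^{-N}/C_N$ for every $N$, precisely because $f$ vanishes to infinite order. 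A single boundary point is negligible only for harmonic functions with an a priori growth bound there (a Phragm\'en--Lindel\"of hypothesis), which is exactly what you are trying to prove; the argument is circular. Concretely, take $g(z)=z^2$: then $\mathrm{Re}\,g\ge 0$ on the diameter, $u=\mathrm{Re}(1/g)=\cos(2\theta)/r^2$ is harmonic on the punctured half-disc, equals $1/x^2>0$ on the punctured diameter and is bounded below on the arc $|z|=\delta$, yet $u(iy)=-1/y^2\to-\infty$. So boundary positivity of $\mathrm{Re}(1/f)$ simply does not propagate inward by a soft maximum-principle argument.

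The second defect, which you acknowledge, is that interior zeros of $f$ accumulating at $0$ are consistent with all the hypotheses (the whole of Section 2 of this paper is devoted to coping with such zeros), and near any such zero $\mathrm{Re}(1/f)$ takes values near $-\infty$, so the function $P=1/f+1/m$ is not even defined, let alone of nonnegative real part. Your closing paragraph correctly identifies both the zeros and the winding of $\arg f$ near $0$ as the crux, but naming the obstacle is not overcoming it: controlling the argument of $f$ on the two sides of the origin is the entire content of the Baouendi--Rothschild theorem (and of the index bookkeeping in Lemmas \ref{le1}--\ref{le3} of this paper for the cone case). The final Borel--Carath\'eodory step ($|P(iy)|\lesssim 1/y$ implies $|f(iy)|\ge cy$, contradicting infinite-order vanishing) would be fine if $\mathrm{Re}\,P\ge 0$ were established, but as it stands the proposal does not constitute a proof.
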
 

 In \cite{HKMP95}, Huang et al. posted the following conjecture.
 \begin{conjecture}[Huang et al.'s conjecture]
 Let $\Delta^+$ be the upper half disc in $\mathbb C$. Assume that $f$ is a holomorphic function on $\Delta^+$ with continuous extension up to $(-1,1)$ such that $f$ maps $(-1,1)$ into $\Gamma_C$, for some positive $C$. If $f$ vanishes to  infinite order at $0$, then $f$ vanishes identically.
 \end{conjecture}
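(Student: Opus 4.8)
The plan is to reduce the statement to the Baouendi--Rothschild theorem (Theorem~\ref{T2}) by ``opening up'' the double cone $\Gamma_C$ into a half-plane. Assume $f\not\equiv 0$ and argue for a contradiction. Set $\theta_0=\arctan C\in(0,\pi/2)$, so that $\Gamma_C$ consists of the two sectors $\{w\colon \arg w\in[-\theta_0,\theta_0]\cup[\pi-\theta_0,\pi+\theta_0]\ (\mathrm{mod}\ 2\pi)\}$. The key first move is to pass to $g=f^2$: since squaring doubles arguments and the two components of $\Gamma_C$ differ by $\pi$, the function $g$ is holomorphic on $\Delta^+$, is $\mathcal{C}^\infty$ up to $(-1,1)$, vanishes to infinite order at $0$, and maps $(-1,1)$ into the single sector $S:=\{w\colon |\arg w|\le 2\theta_0\}$ of half-angle $2\theta_0<\pi$ (a real zero of $f$ gives $g=0$, which lies in $\overline{S}$).

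If $C\le 1$, then $2\theta_0\le \pi/2$ and $S$ is contained in the closed right half-plane, so $\mathrm{Re}\,g\ge 0$ on $(-1,1)$; Theorem~\ref{T2} applied to $g$ gives $g\equiv 0$, hence $f\equiv 0$. For general $C$ the sector $S$ is too wide, and the plan is to straighten it with the conformal power $w\mapsto w^{\beta}$, $\beta=\pi/(4\theta_0)\in(0,1)$, which carries $S$ onto the closed right half-plane. Granting for the moment that a single-valued holomorphic branch $F:=g^{\beta}$ can be defined on a half-disc $\Delta_r^+=\{z\colon |z|<r,\ \mathrm{Im}\,z>0\}$, one checks that $F$ is holomorphic there, continuous up to $(-r,r)$, satisfies $\arg F(x)=\beta\,\arg g(x)\in[-\pi/2,\pi/2]$ so that $\mathrm{Re}\,F\ge 0$ on $(-r,r)$, and still vanishes to infinite order at $0$ because $|F|=|g|^{\beta}=o(|z|^{2N\beta})$ for every $N$. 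After the affine rescaling $z\mapsto z/r$ identifying $\Delta_r^+$ with $\Delta^+$, Theorem~\ref{T2} forces $F\equiv 0$, whence $g\equiv 0$ and $f\equiv 0$, the desired contradiction. Observe that this last step requires only $\mathcal{C}^0$ regularity of $F$, so the fact that a fractional power need not be smooth at the real zeros of $g$ is harmless for the application of Theorem~\ref{T2}.

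The main obstacle is exactly the parenthetical assumption above: the branch $g^{\beta}$ with non-integer $\beta$ exists on the simply connected domain $\Delta_r^+$ if and only if $g$ (equivalently $f$) has no zeros in the interior of $\Delta_r^+$, and a priori the interior zeros of $f$ could accumulate at the boundary point $0$. Factoring these zeros out is not innocent here, since dividing by a Blaschke-type product alters the boundary argument of $g$ and thereby destroys the very sector condition on which the whole reduction rests; hence the real task is to prove that the interior zeros of $f$ do \emph{not} accumulate at $0$, after which one simply shrinks $r$. I expect this non-accumulation statement to be the technical heart of the proof. The natural tools to bring to bear are Lakner's theorem (Theorem~1), which already shows that infinite-order vanishing forces the \emph{real} zeros of $f$ to accumulate at $0$ and so constrains the boundary zero structure, together with a quantitative estimate on the half-discs $\Delta_\rho^+$ (of Jensen/argument-principle type, exploiting the $\mathcal{C}^\infty$-flatness and super-polynomial smallness of $|f|$ near $0$) designed to rule out interior zeros clustering at a point of infinite-order flatness. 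Once such non-accumulation is secured, the reduction of the preceding paragraphs closes the argument.
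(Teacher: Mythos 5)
Your reduction has a genuine gap that is not the one you flag, and it is fatal as stated. Granting even that $g=f^2$ has no zeros in the open half-disc $\Delta_r^+$, the branch $F=g^{\beta}$ you construct there does \emph{not} satisfy $\mathrm{Re}\,F\ge 0$ on $(-r,r)$. The pointwise condition $g(x)\in S$ only says that $\arg g(x)\in[-2\theta_0,2\theta_0]\pmod{2\pi}$; the branch of $\arg g$ that is continuous from $\Delta_r^+$ is something else. By Lakner's theorem (Theorem 1 of the paper), if $f\not\equiv 0$ vanishes to infinite order at $0$ then the real zeros of $f$ must accumulate at $0$; at a real zero of $f$ of order $m$ the function $g$ has a zero of order $2m$, and the interior-continuous branch of $\arg g$ jumps by $2m\pi\in 2\pi\mathbb Z$ across it. So on successive components of $(-r,r)\setminus Z(g)$ the continuous argument lies in $[-2\theta_0,2\theta_0]+2\pi k_j$ with $k_j$ drifting without bound as the zeros accumulate at $0$; multiplying by the non-integer $\beta$ sends $\arg F=\beta\arg g$ outside $[-\pi/2,\pi/2]+2\pi\mathbb Z$ on all but finitely many components, and no shrinking of $r$ repairs this. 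In other words, the winding of $f$ along $(-1,1)$ is exactly the obstruction, and a conformal change of target cannot make it disappear. This is precisely the phenomenon the paper confronts head-on: its Lemmas \ref{le1}--\ref{le3} track the index $I(r)=\mathrm{Ind}(f\circ\gamma_r)$, show that the cone condition bounds $I(r)$ from above, and show that infinite-order vanishing forces $\limsup_{r\to 0^+}\frac{1}{\ln(1/r)}\int_r^1 I(t)\,dt/t=+\infty$, giving the contradiction. Your proposal, by contrast, implicitly assumes the winding is trivial.

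The gap you do acknowledge (non-accumulation of interior zeros at $0$) is also not something you can hope to borrow from the paper: the paper never proves it and in fact its entire setup allows interior zeros of $f$ accumulating at $0$ (they are the quantities $\kappa(n)$ on the circles $\gamma_{r_n}$, handled via the argument principle), so you would need an independent and, I suspect, false or at best very hard lemma. Finally, note that the paper does not prove the conjecture as stated; it proves it only under the additional hypotheses of Theorem \ref{Th2} ($\mathcal C^\infty$ regularity up to $(-1,1)$ and discreteness of the real zero set), and the case $C\le 1$ via $\mathrm{Re}[f^2]\ge 0$ and Theorem \ref{T2} --- which is the only part of your argument that is correct and is already observed in the introduction. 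To make progress along the paper's lines you should abandon the fractional-power reduction and instead estimate the index $I(r)$ directly.
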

Notice that if $C=1$, then $\mathrm{Re}[f^2(x)]\geq 0$ for every $x\in (-1,1)$. Therefore, it follows from Theorem \ref{T2} that this conjecture is true in case $C\leq 1$. 

In this paper, we prove the following theorem which ensures that Huang et al.'s conjecture is true for the case that $f$ is $\mathcal{C}^\infty$-smooth up to the boundary.
\begin{theorem} \label{Th2}
Suppose that $f\in \mathrm{Hol}(\Delta^+)\cap \mathcal{C}^\infty(\overline{\Delta^+})$ and $f(-1,1)\subset \Gamma_\infty:=\big(\mathbb C\setminus i\mathbb R\big)\cup \{0\}$. Suppose also that the set of zeros of $f\mid_{(-1,1)}$ is discrete and its limit point is $0$. If $f$ vanishes to infinite order at $0$, then $f\equiv 0$.
\end{theorem}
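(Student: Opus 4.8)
The plan is to argue by contradiction: assume $f\not\equiv 0$ and extract a contradiction from the infinite-order vanishing at $0$. First I would transport the problem to the upper half-plane $\mathbb{H}=\{\mathrm{Im}(w)>0\}$ by a conformal map $\psi\colon\Delta^+\to\mathbb{H}$ that sends $0$ to $\infty$ (it has a simple pole there, so $|\psi(z)|\sim c/|z|$) and carries $(-1,1)$ to the two unbounded rays of $\partial\mathbb{H}=\mathbb{R}$. Writing $F=f\circ\psi^{-1}$, the hypotheses become: $F$ is holomorphic on $\mathbb{H}$, smooth up to those rays, bounded (as $f\in\mathcal{C}^0(\overline{\Delta^+})$), $F(\mathbb{R})\subset\Gamma_\infty$ on the rays, the zeros of $F\mid_{\mathbb{R}}$ are discrete and accumulate only at $\infty$, and—the payoff of sending $0\mapsto\infty$—the infinite-order vanishing becomes faster-than-polynomial decay, $|F(w)|\le C_N|w|^{-N}$ for every $N$ and all large $w\in\overline{\mathbb{H}}$. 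The goal is now to show $F\equiv 0$.

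The key algebraic device is to pass to $G=F^2$. On each boundary interval between consecutive zeros, $F$ lies in an open half-plane $\{\mathrm{Re}>0\}$ or $\{\mathrm{Re}<0\}$, and a one-line computation (if $F=a+bi$ with $a\neq 0$ then $G=(a^2-b^2)+2abi$, which is never a negative real) shows $G$ avoids $(-\infty,0)$ there; moreover $G$ vanishes to even order at each boundary zero, so $\arg G$ extends continuously across the zeros. Hence $\arg G$ is a continuous, $(-\pi,\pi)$-valued function on all of $\mathbb{R}$: squaring removes the sign ambiguity of $F$ and confines the boundary argument to a single branch. The natural target is to promote this boundary confinement to the interior, i.e. to produce a single-valued holomorphic branch $\Phi=\sqrt{G}$ (necessarily $\Phi=\pm F$) mapping $\mathbb{H}$ into the closed half-plane $\overline{\{\mathrm{Re}\ge 0\}}$. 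Equivalently one wants $G$ to map the interior into the slit plane $\mathbb{C}\setminus(-\infty,0]$, so that the principal square root lands in $\overline{\{\mathrm{Re}\ge 0\}}$; this is exactly the hypothesis of Baouendi--Rothschild (Theorem~\ref{T2}), transported to $\mathbb{H}$, which would immediately force $\Phi\equiv 0$.

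Once $\Phi\colon\mathbb{H}\to\overline{\{\mathrm{Re}\ge 0\}}$ is in hand the contradiction is quick and can be made independent of Theorem~\ref{T2}. If $\Phi$ is nonconstant then $i\Phi$ is a nonconstant holomorphic self-map of $\mathbb{H}$ (a Nevanlinna/Pick function), hence distance-decreasing for the hyperbolic metric. Evaluating along the imaginary axis, where $d_{\mathbb{H}}(i,iR)=\log R$, while $d_{\mathbb{H}}\big(i,\,i\Phi(iR)\big)\ge \log\big(1/|\Phi(iR)|\big)-C$ because $\Phi(iR)\to 0$, the Schwarz--Pick inequality gives $\log\big(1/|\Phi(iR)|\big)\le \log R+C'$, i.e. $|\Phi(iR)|\ge c/R$. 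Since $|\Phi|=|F|\le C_N R^{-N}$, the choice $N\ge 2$ is absurd; therefore $\Phi$, and hence $F$ and $f$, vanish identically. (Equivalently, $-1/(i\Phi)$ is a Nevanlinna function, so it grows at most linearly, contradicting the super-polynomial decay of $\Phi$.)

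The main obstacle is the middle step: passing from $\arg G\in(-\pi,\pi)$ on $\mathbb{R}$ to an interior bound, that is, ruling out interior winding of $G$ about $0$. This is genuinely the hard point, because a bounded holomorphic function whose boundary values avoid a ray need not map the interior into the slit plane. Here I expect the two unused hypotheses to enter. Boundedness of $F$ lets one factor out the interior zeros by a Blaschke product for $\mathbb{H}$ (they satisfy the Blaschke condition since $F\in H^\infty$), reducing to a zero-free $G$; on the simply connected $\mathbb{H}$ one then writes $G=e^{\log G}$, and the nonnegative harmonic function $-\log|G|+\mathrm{const}$ admits a Herglotz representation whose linear term $\alpha\, \mathrm{Im}(w)$ must vanish, since otherwise its conjugate would make $\arg G$ grow linearly on $\mathbb{R}$, contradicting the boundary bound. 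With $\alpha=0$, $\arg G$ is essentially the Poisson integral of bounded boundary data and is therefore bounded in $\mathbb{H}$, which is what we need. The delicate part is controlling the singular and inner contributions so that this Phragm\'en--Lindel\"of argument is rigorous, and this is precisely where $\mathcal{C}^\infty$-smoothness up to $(-1,1)$ and the discreteness of the zero set are used: at each boundary zero $x_j\neq 0$ one first checks (or reduces to the theorem at $x_j$) that $f$ vanishes to finite order, so the local winding of $F$, hence of $G$, is controlled, and an argument-principle count then bounds the total interior winding. I expect this boundary-regularity and winding analysis, rather than the soft half-plane endgame, to be the technical heart of the proof.
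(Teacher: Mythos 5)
Your endgame is sound (squaring to push the boundary values of $G=F^2$ off the ray $(-\infty,0)$, then a Schwarz--Pick/Nevanlinna estimate once a branch of $\sqrt{G}$ maps into a half-plane), and you have correctly located the real difficulty in the middle step: promoting the bound $\arg G\in(-\pi,\pi)$ on the boundary to a bound on the interior winding of $G$ about $0$. But the mechanism you propose for that step has a genuine gap. First, the Herglotz representation controls $-\log|G|$, not its harmonic conjugate $\arg G$: killing the linear term $\alpha\,\mathrm{Im}(w)$ only removes linear growth, and the conjugate Poisson integral of a finite positive measure (equivalently, the Hilbert transform of bounded data) is in general unbounded, so ``$\arg G$ is the Poisson integral of bounded boundary data and therefore bounded'' does not follow --- indeed, as you yourself observe, the conclusion it would yield (a bounded holomorphic function whose boundary values avoid a ray maps the interior into the slit plane) is false in general, and your argument as written never inputs anything that distinguishes the present situation from that false generality. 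Second, dividing out the Blaschke product $B$ of the interior zeros destroys the boundary bound you are relying on: on $\mathbb R$ one has $\arg(G/B)=\arg G-\arg B$, and $\arg B$ increases by roughly $2\pi$ per zero, hence is unbounded when the zeros accumulate at $\infty$; a possible singular inner factor contributes a further unbounded argument. So the quantity you must control is precisely the total count of interior and boundary zeros together with the argument increments along the real segments between them --- and your proposal does not actually carry out that bookkeeping.

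That bookkeeping is exactly what the paper does, staying in $\Delta^+$ and working with $I(r)=\mathrm{Ind}(f\circ\gamma_r)$ on semicircles $|z|=r$. Lemma \ref{le1} computes the jump $I(r_n^+)-I(r_n^-)=\kappa(n)+\tilde\kappa(n)/2$ produced by the zeros on $|z|=r_n$; Lemma \ref{le2} telescopes these jumps via Cauchy's theorem on the semiannuli $A_n$ and uses the hypothesis $f(-1,1)\subset\Gamma_\infty$ to bound the index increments along the real segments by $\tfrac12\sum\kappa'(n)$, concluding that $I(r)$ is bounded above; Lemma \ref{le3} then shows that infinite-order vanishing at $0$ forces $\limsup_{r\to0^+}\frac{1}{\ln(1/r)}\int_r^1 I(t)\,dt/t=+\infty$, which is the contradiction (no half-plane reduction or appeal to Theorem \ref{T2} is needed). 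The $\mathcal C^\infty$ hypothesis enters to guarantee finite-order vanishing at the nonzero boundary zeros so that the one-sided limits $I(r_n^\pm)$ exist, and Remark \ref{remark2} handles a possible infinite-order zero at $b\neq0$, much as you sketch. If you want to rescue your route, you would need to replace the Herglotz step by precisely this argument-principle count of zeros versus boundary argument increments; as it stands, the interior winding bound is asserted rather than proved.
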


\begin{remark}\label{remark1}
The theorem is still true if $\Gamma$  is replaced by any domain $(\mathbb C\setminus L)\cup \{0\}$, where $L$ is a line in the complex plane through the origin.
\end{remark}

Throughout this paper, we assume that $f$ is holomorphic in $\Delta^+$, $\mathcal{C}^\infty$-smooth up to $(-1,1)$. The set of zeros of $f$ on $(-1,1)$ is discrete and its limit point is $0$. Each zero of $f$ on $(-1,1)\setminus\{0\}$ is of finite order. Let $\{r_n\}\subset \mathbb R^+$ be a decresing sequence such that all zeros of $f$ in $\overline{\Delta^+}$ lie on $\cup_n \gamma_{r_n}$, where $\gamma_r:=\{z\in \mathbb C \colon |z|=r,~\mathrm{Im}(z)\geq 0\}$ is an upper semi-circle with radius $r>0$. Denote $\kappa(n)$ by the number of zeros of $f$ on $\gamma_{r_n}\cap \Delta^+$, counted multiplicities, $\tilde\kappa(n)$ by the number of zeros of $f$ on $\gamma_{r_n}\cap (-1,1)$, counted multiplicities, and ${\kappa}{'}(n)$ by the number of zeros of $f$ on $\gamma_{r_n}\cap (-1,1)$ without counting multiplicities. In addition, denote $A_n:=\{re^{i\theta}\colon r_{n+1}<r<r_n,~0\leq \theta\leq \pi\}$. 

Let us recall that the index of a piecewise smooth curve $\gamma:[a,b]\to \mathbb C^*$ with respect to $0$ is the real number 
 $$
 \mathrm{Ind}(\gamma):=\mathrm{Re}\frac{1}{2\pi i}\int_\gamma \frac{dz}{z}=\mathrm{Re}\frac{1}{2\pi i}\int_a^b \frac{\gamma'(t)}{\gamma(t)}dt.  
 $$
 Notice that $\mathrm{Ind}(\gamma)<1/2$ if $\gamma\subset\Gamma_\infty\setminus \{0\}$ and $\mathrm{Ind}(\gamma)<1$ if $\gamma\subset \mathbb C^*\setminus \{iy\colon y>0\}$. For more properties of indices we refer the reader to \cite{La91}.
\begin{remark}\label{remark2} Let $f$ be a holomorphic function as in Theorem \ref{Th2}. Suppose that $f$ vanishes to infinite order at $b\in (-1,1)\setminus \{0\}$ with maximum modulus among zeros of $f$ on $(-1,1)$. Noting that $b$ is an isolated zero of $f$. Thus, by \cite[Lemma 2]{La91} (or Lemma \ref{le3} in Section \ref{s2}), there exists a sequence of upper semi-circles $\{\gamma_n\}$ with centre $b$ and radii $\epsilon_n$ such that $\epsilon_n \to 0^+$ and $\mathrm{Ind}(f\circ \gamma_n)\to +\infty$ as $n\to \infty$. Moreover, one can choose a sequence of upper semi-circles $\{\gamma'_n\}$ with centre $-b$ and radii $\epsilon'_n$ such that $\epsilon'_n\to 0^+$ and $\{\mathrm{Ind}(f\circ \gamma'_n)\}$ is bounded from blow. Now, fix $r,r>0$ with $r'<|b|<r$ and $|r-r'|$ small enough. We can construct a closed path $\gamma=\gamma_r+(-\gamma_{r'})+(-\gamma_n)+(-\gamma_n')+\sum_{j=1}^4 l_{n,j}$, where $l_{n,1}=[-r,-b-\epsilon'_n],l_{n,2}=[-b+\epsilon'_n,-r'],l_{n,3}=[r',b-\epsilon_n]$, and $ l_{n,4}=[b+\epsilon_n,r]$. Then, applying Cauchy's theorem to $f'/f$, one obtains that
 \begin{equation*}
 \begin{split}
 0&=\mathrm{Re}\frac{1}{2\pi i}\int\limits_\gamma \frac{f'(z)}{f(z)} dz\\
 & =\mathrm{Ind}(f\circ \gamma_r)-\mathrm{Ind}(f\circ \gamma_{r'})+\sum_{j=1}^4 \mathrm{Ind}(f\circ l_j) -\mathrm{Ind}(f\circ \gamma_n)- \mathrm{Ind} (f\circ \gamma_n').
 \end{split}
 \end{equation*}
 By the above argument, one has $\mathrm{Ind}(f\circ \gamma_n)+\mathrm{Ind}(f\circ \gamma'_n)\to +\infty$. Moreover, since $f\circ l_{n,j}\subset \Gamma_\infty$, for $1\leq j\leq 4$, one has
$$
|\sum_{j=1}^4 \mathrm{Ind}(f\circ l_{n,j})|<2.
$$
This is a contradiction since $|\mathrm{Ind}(f\circ \gamma_r)-\mathrm{Ind}(f\circ \gamma_{r'})|$ is finite.

Therefore, without loss of generality, we may assume that $f$ only vanishes to infinite order at $0$. 
\end{remark}
\begin{remark}
In Theorem \ref{Th2}, the condition that $f\in\mathcal{C}^\infty(\overline{\Delta^+})$ is the only technical condition but is important for showing the existence of left and right hand limits $I(r_n^\pm)$ (see the notation $I(r)$ below).   
\end{remark}
\begin{remark}[Notations]
Throughout the paper, taking the risk of confusion we employ the following notations
\begin{enumerate}
\item[(i)] $\Gamma_C:=\{z\in \mathbb C\colon |\mathrm{Im} (z)| \leq C|\mathrm{Re} (z)|\}$, $C>0$;
\item[(ii)] $\Gamma_\infty:=\{z\in \mathbb C\colon \mathrm{Re} (z)\ne 0\}\cup\{0\}$;
\item[(iii)]$\Delta^+:=\{z\in \mathbb C\colon |z|<1,~\mathrm{Im}(z)> 0\}$;
  \item[(iii)] $\mathbb R^+:=\{x\in \mathbb R\colon x>0\}$;
  \item[(iv)] $I(r):=\mathrm{Ind}(f\circ \gamma_{r})~(r>0)$, where $\gamma_r:=\{z\in \mathbb C\colon |z|=r,~\mathrm{Im}(z)\geq 0\}$.
\end{enumerate}
Let $f,g : A \to \mathbb C$ be functions defined on $A\subset \mathbb C$ with $0\in \overline{A}$ such that $\lim_{z\to 0} f(z)=\lim_{z\to 0} g(z)= 0$. We write 
\begin{enumerate}
\item[(v)] $f\sim g$ at $0$ on $A$ if $\lim_{z\to 0}f(z)/g(z) = 1$;
\item[(vi)]$f \approx g$ at $0$ on $A$ if there is $C>0 $ such that $1/C |g(z)|\leq |f(z)|\leq C |g(z)|$ for all $z \in A$.
\end{enumerate}
  \end{remark}
 This paper is organized as follows. In Section \ref{s2}, we give several lemmas needed later and then we prove Theorem \ref{Th2}.  Some regularity properties of the Riemann mapping functions on the boundary and an example are given in Section \ref{s3}. 
 \section{Finite order vanishing of boundary values of holomorphic functions} \label{s2}
 In this section, we shall prove some technical lemmas which generalize Lakner's lemmas (cf. \cite{La91}). The proofs follow very closely the proofs of \cite{La91}. In fact, M. Lakner proved his lemmas for the case that $f$ has no zeros in $(-1,1)$, except the origin. In this paper, we consider the case that the origin is not an isolated zero of the restricted function $f\mid_{(-1,1)}$. Therefore, in order to prove these lemmas, we shall modify Lakner's method for the general case.
 
  First of all, we have the following lemma.
 \begin{lemma}\label{le1}Suppose that $f\in \mathrm{Hol}(\Delta^+)\cap \mathcal{C}^\infty(\overline{\Delta^+})$ and $f(-1,1)\subset \Gamma:=\mathbb C\setminus i\mathbb R^+$. Suppose also that the set of zeros of $f\mid_{(-1,1)}$ is discrete, its limit point is $0$, and each zero of $f$ on $(-1,1)\setminus \{0\}$ is of finite order. Then, we have
\begin{enumerate} 
\item[(i)] $I(r_n^+)-I(r_n^-)=\kappa(n)+\frac{\tilde \kappa(n)}{2}$;
\item[(ii)] $|I(r)-I(r')|<2,~r_{n+1}< r,r'< r_n$.
\end{enumerate}
\end{lemma}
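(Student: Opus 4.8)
The plan is to derive both parts from Cauchy's theorem and the argument principle applied to $f'/f$ over closed contours assembled from the upper semicircles $\gamma_r$ and the radial segments on $(-1,1)$. Two structural facts drive everything: all zeros of $f$ in $\overline{\Delta^+}$ lie on the semicircles $\gamma_{r_n}$, so $f$ is zero-free on each open half-annulus, and $f$ sends the diameter into the slit plane $\Gamma\setminus\{0\}=\CC^*\setminus i\RR^+$, on which the index of any arc has absolute value $<1$ by the slit-plane property recorded before the lemma.

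For (ii), fix $r_{n+1}<r'<r<r_n$. Then $f$ has no zero on the closed half-annulus bounded by $\gamma_{r'}$, $\gamma_r$ and the radial segments $[r',r]$ and $[-r,-r']$, so $f'/f$ is holomorphic there. Applying $\mathrm{Re}\,\tfrac{1}{2\pi i}\oint$ around the positively oriented boundary gives
\[
I(r)-I(r')=-\mathrm{Ind}\paren{f\circ[-r,-r']}-\mathrm{Ind}\paren{f\circ[r',r]}.
\]
Since each radial segment maps into $\Gamma\setminus\{0\}$, each index on the right has absolute value $<1$, and (ii) follows with the bound $2$.

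For (i), I choose $r^-<r_n<r^+$ in the two adjacent zero-free annuli and apply the argument principle to $f'/f$ on the upper half-annulus between $\gamma_{r^-}$ and $\gamma_{r^+}$, indenting the contour by small upper semicircles $\sigma_\epsilon^{\pm}$ of radius $\epsilon$ around the diameter zeros $\pm r_n$. The only zeros left strictly inside are the $\kappa(n)$ zeros on the open semicircle $\gamma_{r_n}\cap\Delta^+$, so the closed contour integral equals $\kappa(n)$. Decomposing the contour and taking $\mathrm{Re}\,\tfrac{1}{2\pi i}$ produces $I(r^+)-I(r^-)$, the indices of the four radial segments, and the two indentation contributions. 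Writing $f(z)=(z\mp r_n)^{m_\pm}g_\pm(z)$ near $\pm r_n$ with $g_\pm(\pm r_n)\neq0$ and $m_++m_-=\tilde\kappa(n)$, the singular term $m_\pm/(z\mp r_n)$ integrated over the half-turn $\sigma_\epsilon^{\pm}$ yields exactly $-m_\pm/2$, while $g_\pm'/g_\pm$ contributes $o(1)$ as $\epsilon\to0$; the indentations thus contribute $-\tilde\kappa(n)/2$ in total.

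The step I expect to be the main obstacle is showing that the radial-segment indices tend to $0$ as $r^\pm\to r_n$ and $\epsilon\to0$, so that only the jump, the interior zeros and the indentations survive. The decisive observation is that on the real axis the singular part $m_\pm/(x\mp r_n)$ is \emph{real}, hence contributes nothing to $\mathrm{Re}\,\tfrac{1}{2\pi i}\int$; what remains is the index of the continuous, nonvanishing factor $g_\pm$ over a shrinking segment, and since $\arg f$ then varies only through $g_\pm$, this variation tends to $0$. This is precisely where $\mathcal{C}^\infty$-smoothness up to the boundary enters: it furnishes the factorization with a $\mathcal{C}^1$ regular factor $g_\pm$ and, more basically, guarantees the existence of the one-sided limits $I(r_n^\pm)$. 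Letting $r^\pm\to r_n$ and $\epsilon\to0$ then gives $\kappa(n)=I(r_n^+)-I(r_n^-)-\tilde\kappa(n)/2$, which rearranges to the claimed identity in (i).
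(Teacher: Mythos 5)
Your proof is correct. Part (ii) coincides with the paper's argument: Cauchy's theorem for $f'/f$ on the zero-free half-annulus together with the bound $|\mathrm{Ind}(f\circ l^{\pm})|<1$ for paths lying in the slit plane $\mathbb C^*\setminus i\mathbb R^+$. For part (i), however, you take a genuinely different route. The paper does not integrate over a region at all: it factors $f(z)=(z-\alpha_1)^{l_1}\cdots(z-\alpha_m)^{l_m}\varphi(z)$ with $\varphi$ nonvanishing near $\gamma_{r_n}$, writes $I(r)=\sum_j l_j\,\mathrm{Ind}(\gamma_r-\alpha_j)+\mathrm{Ind}(\varphi\circ\gamma_r)$, and evaluates the one-sided limits of each elementary index directly ($3/4$ versus $-1/4$ for a zero of $f$ in $\Delta^+$ with $|a|=r_n$, and $1/2$ versus $0$ for a zero at $\pm r_n$), so that the jump $\kappa(n)+\tilde\kappa(n)/2$ falls out by subtraction. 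You instead apply the argument principle on the indented half-annulus; your bookkeeping is sound (the interior zeros give $\kappa(n)$, each clockwise half-turn about a boundary zero of order $m_\pm$ gives $-m_\pm/2$, the singular part $m_\pm/(x\mp r_n)$ is real on the axis and so invisible to $\mathrm{Re}\,\tfrac{1}{2\pi i}\int$, and the regular factors contribute $o(1)$), and it rests on the same local factorization the paper uses. The one point your version settles only implicitly is the \emph{existence} of the separate one-sided limits $I(r_n^{\pm})$: your contour identity yields $I(r^+)-I(r^-)=\kappa(n)+\tilde\kappa(n)/2+o(1)$, which identifies the jump once both limits are known to exist, whereas the paper's term-by-term computation establishes existence and value simultaneously (and the paper remarks that this existence is exactly where the $\mathcal{C}^\infty$ hypothesis is needed). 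This is easily repaired from your own factorization, since each summand of $I(r)$ visibly has one-sided limits at $r_n$, but it should be stated. What your approach buys is a transparent residue-count interpretation of the jump; what the paper's buys is the explicit values of $I(r_n^{+})$ and $I(r_n^{-})$ themselves.
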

\begin{proof}

\noindent
{\bf(i)}
For each $n$ we can write $f$ in the form 
$$
f(z)=(z-\alpha_1)^{l_1}\cdots (z-\alpha_m)^{l_m} \varphi(z),
$$
where $\alpha_1,\ldots,\alpha_{m}$ are all zeros of $f$ on $\gamma_{r_n}$ and $\varphi$ is a continuous function without zeros on ${A_{n-1}\cup A_n}\cup\gamma_{r_n}$, holomorphic in the interior. 

Therefore, we have
$$
\frac{f'(z)}{f(z)}=\sum_{j=1}^m \frac{l_j}{z-\alpha_j}+\frac{\varphi'(z)}{\varphi(z)},
$$
and thus 
$$
I(r)=\sum_{j=1}^m l_j \mathrm{Ind}(\gamma_r-\alpha_j)+\mathrm{Ind}(\varphi\circ \gamma_r). 
$$

At the point $a\in \Delta^+$ with $|a|=r_n$, $\lim_{r\to r_n^+}\mathrm{Ind}(\gamma_r-a)=3/4$ and $\lim_{r\to r_n^-}\mathrm{Ind}(\gamma_r-a)=-1/4$. Moreover, at the point $b\in \gamma_{r_n}\cap (-1,1)$, we have 
$\lim_{r\to r_n^+}\mathrm{Ind}(\gamma_r-b)=1/2$ and $\lim_{r\to r_n^-}\mathrm{Ind}(\gamma_r-b)=0$.  

Therefore, we have
\begin{equation*} \label{eq4}
\begin{split}
I(r_n^+)&=\lim_{r\to r_n^+}\mathrm{Ind}I(r)=3\kappa(n)/4+\tilde{\kappa}(n)/2 +\mathrm{Ind}(\varphi\circ \gamma_{r_n}),\\
I(r_n^-)&=\lim_{r\to r_n^-}\mathrm{Ind} I(r)=-\kappa(n)/4+\mathrm{Ind}(\varphi\circ \gamma_{r_n}).  
\end{split}
\end{equation*}   
Hence, we conclude that $I(r_n^+)-I(r_n^-)=\kappa(n)+\tilde \kappa(n)/2$.

\noindent
{\bf(ii)} Fix $r_{n+1}<r'<r<r_n$ and construct a closed path $\gamma=\gamma_{r}+(-\gamma_{r'})+l^++l^-$, where $l^+=[r',r]$ and $l^-=[-r,-r']$. Employing Cauchy's theorem to $f'/f$ we obtain
$$ 
0 =\mathrm{Re} \frac{1}{2\pi i} \int_\gamma \frac{f'}{f} dz=I(r)+\mathrm{Ind}(f\circ l^+)-I(r')+\mathrm{Ind}(f\circ l^-).
$$
Since the paths $f\circ l^\pm\subset \Gamma$, $|\mathrm{Ind}(f\circ l^\pm)|<1$. This implies that $|I(r)-I(r')|<2$.
\end{proof}

\begin{lemma} \label{le2} Suppose that $f\in \mathrm{Hol}(\Delta^+)\cap \mathcal{C}^\infty(\overline{\Delta^+})$ and $f(-1,1)\subset \Gamma:=\big(\mathbb C\setminus i\mathbb R^+\big) \cup \{0\}$. Suppose also that the set of zeros of $f\mid_{(-1,1)}$ is discrete, its limit point is $0$, and each zero of $f$ on $(-1,1)\setminus \{0\}$ is of finite order. Then, $I(r)$ is bounded from above if one of following conditions is satisfied:
\begin{enumerate}
\item[(i)] $\Gamma$ is the cone $(\mathbb C\setminus L)\cup \{0\}$, where $L$ is a line through the origin.
\item[(ii)] Each zero of $f$ on $(-1,1)$ is of order at least $2$. 
\item[(iii)] $\sum_{n=1}^\infty \kappa(n)+\frac{1}{2}\sum_{n=1}^\infty \tilde \kappa(n) -\sum_{n=1}^\infty \kappa'(n)=+\infty$. 
\item[(iv)] $\Gamma$ is a half-plane $\{z\in \mathbb C\colon \mathrm{Re}(az)\geq 0\}$ for some $a\in \mathbb C^*$.
\end{enumerate}
\end{lemma}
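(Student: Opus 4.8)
The plan is to control $I(r)$ as $r\to 0^+$ by telescoping Lemma \ref{le1} across the circles $\gamma_{r_n}$. Writing $a_n := I(r_n^+)$, part (i) of Lemma \ref{le1} gives the downward jump $I(r_n^+)-I(r_n^-)=\kappa(n)+\tilde\kappa(n)/2\ge 0$, while the within-annulus increment $\delta_n := I(r_{n+1}^+)-I(r_n^-)$ satisfies $|\delta_n|<2$ by part (ii). Summing yields $a_N = a_1 - \sum_{n<N}\big(\kappa(n)+\tfrac12\tilde\kappa(n)\big) + \sum_{n<N}\delta_n$. Since the jump terms only push $I$ down, the whole difficulty is to bound $\sum_{n<N}\delta_n$ from above; the crude estimate $|\delta_n|<2$ is useless here because it accumulates to $O(N)$.

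First I would rewrite $I(r)$ as a boundary quantity. Parametrizing $\gamma_r$ and integrating $f'/f$ gives $2\pi I(r)=\arg f(-r)-\arg f(r)$, the net rotation of $f$ along the upper semicircle (the one-sided limits $\arg f(r_n^\pm)$ exist precisely because $f\in\mathcal{C}^\infty(\overline{\Delta^+})$ and each boundary zero has finite order). Using the closed-path identity from the proof of Lemma \ref{le1}(ii), $\delta_n$ equals $\tfrac{1}{2\pi}$ times the change of $\arg f$ along the two real segments bounding $A_n$. The key observation is that these changes telescope along the positive and negative real axes: $\sum_{n<N}\delta_n=\tfrac{1}{2\pi}(S^+ + S^-)$, where each $S^{\pm}$ collapses to bounded boundary terms minus the sum of the jumps of $\arg f$ across the real zeros $\pm r_n$.

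The crux is to bound these two ingredients. Because $f((-1,1))\subset\Gamma\setminus\{0\}$ and $\Gamma$ admits a single-valued argument of range $2\pi$ (slit plane) or $\pi$ (half-plane (iv)), the boundary terms $\arg f(\pm r_N^+)$ stay in a fixed bounded interval as $N\to\infty$. For the jumps, near a real zero $r_n$ of order $k$ one has $f(x)\sim c(x-r_n)^k$, so the one-sided limits of $\arg f$ differ by $k\pi$ modulo $2\pi$; since both lie in an interval of length $2\pi$ (resp. $\pi$) their difference is at most $2\pi$ (resp. forced into $\{0,\pm\pi\}$, hence at most $\pi$) in absolute value. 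Consequently $\sum_{n<N}\delta_n\le(\text{bounded})+c\sum_{n<N}\kappa'(n)$, with $c=1$ in the general slit-plane case and $c=\tfrac12$ in the half-plane and line-complement cases; for (i) the line-complement is handled by slitting $\Gamma$ along one ray and using the rigidity of the jump ($\equiv k\pi$) to pin it to $\{0,\pm\pi\}$.

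Substituting into the telescoped identity gives $a_N\le a_1+(\text{bounded})+c\sum_{n<N}\kappa'(n)-\sum_{n<N}\big(\kappa(n)+\tfrac12\tilde\kappa(n)\big)$, and each hypothesis finishes the estimate by elementary bookkeeping: under (iii) the last two sums combine to $-\big[\sum(\kappa+\tfrac12\tilde\kappa)-\sum\kappa'\big]\to-\infty$; under (ii) each real zero has order $\ge 2$, so $\tilde\kappa(n)\ge 2\kappa'(n)$ and the $c=1$ term is absorbed; under (i) and (iv) one has $c=\tfrac12$ and the trivial bound $\tilde\kappa(n)\ge\kappa'(n)$ already gives $\tfrac12\kappa'(n)\le\tfrac12\tilde\kappa(n)$. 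In every case $\{a_N\}$ is bounded above, and combined with Lemma \ref{le1}(ii), which keeps $I$ within $2$ of $a_n$ on each annulus, this shows $I(r)$ is bounded from above. The main obstacle is exactly the refined control of $\sum\delta_n$: passing from the per-annulus estimate of Lemma \ref{le1}(ii) to the telescoped, zero-counting bound, which hinges on the single-valued argument on $\Gamma$ and on the arithmetic rigidity of the argument jump at each finite-order boundary zero.
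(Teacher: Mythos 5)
Your proposal is correct and follows essentially the same route as the paper: telescoping the argument-principle identity of Lemma \ref{le1} across the annuli $A_n$ and bounding the real-axis contribution by $c\sum\kappa'(n)$ (with $c=1$ for the slit plane and $c=\tfrac12$ for a half-plane or line complement) via the single-valued bounded argument on $\Gamma\setminus\{0\}$, then concluding by the same case-by-case bookkeeping. The only cosmetic difference is that you account for the real-axis variation through the argument jumps at each distinct zero, whereas the paper bounds the index of $f$ over each maximal zero-free segment between consecutive zeros; the two accountings are equivalent.
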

\begin{proof}
In case the origin is an isolated zero of $f$, one easily see that $I(r)$ is bounded. Therefore, we consider the case that the set of zeros of $f$ in $ \overline{\Delta^+}$ is a sequence of points converging to the origin. 

Let $\{r_n\}\subset \mathbb R^+$ be a decreasing sequence such that all zeros of $f$ in $\overline{\Delta^+}$ lie on $\cup \gamma_{r_n}$. Now, for $\epsilon>0$ small enough we consider a closed path $\gamma_n^\epsilon=\gamma_{r_n-\epsilon}+(-\gamma_{r_{n+1}+\epsilon})+l_{n,\epsilon}^+ +l_{n,\epsilon}^-$, where $l_{n,\epsilon}^+=[r_{n+1} +\epsilon,r_n-\epsilon]$ and $l_{n,\epsilon}^-=[-r_{n}+\epsilon,-r_{n+1}-\epsilon]$. Then, employing Cauchy's theorem to $f'/f$ we obtain
$$ 
0 =\mathrm{Re} \frac{1}{2\pi i} \int_{\gamma_n^\epsilon} \frac{f'}{f} dz=-I(r_{n+1}+\epsilon)+\mathrm{Ind}(f\circ l_{n,\epsilon}^+)+I(r_n-\epsilon)+\mathrm{Ind}(f\circ l_{n,\epsilon}^-).
$$
Letting $\epsilon\to 0^+$, one obtains that

$$ 
 I(r_n^-) -I(r_{n+1}^+)+s(n)=0,
$$
where $s(n):=\lim_{\epsilon\to 0^+}\Big[\mathrm{Ind}(f\circ l_{n,\epsilon}^+)+\mathrm{Ind}(f\circ l_{n,\epsilon}^-)\Big]$.

From this and Lemma \ref{le1} (i), we have
\begin{equation}\label{eq5}
\begin{split}
\kappa(n)+\tilde \kappa(n)/2=I(r_n^+)-I(r_{n+1}^+) +s(n).
\end{split}
\end{equation}
Summing the first $N$ relations (\ref{eq5}) we obtain
\begin{equation}\label{eq6}
\begin{split}
I(r_1^+)-I(r_{N+1}^+)= \sum_{n=1}^N \kappa(n)+\frac{1}{2}\sum_{n=1}^N\tilde \kappa(n)-\sum_{n=1}^N s(n).
\end{split}
\end{equation}

Now, fix $N\geq 1$. We shall prove that $\sum_{n=1}^N s(n)\leq\sum_{n=1}^N\kappa'(n)$ for case $f(-1,1)\subset \mathbb C\setminus i\mathbb R^+ $ and $\sum_{n=1}^N s(n)\leq 1/2\sum_{n=1}^N\kappa'(n)$ for case $f(-1,1)\subset (\mathbb C\setminus L)\cup \{0\}$, where $L$ is a line through the origin. Indeed, suppose that there exist $i,j\in \mathbb N^*$ with $i<j$ such that $f(r_i)=f(r_j)=0$ and $f(r_k)\ne 0$ for every $i<k<j$. Then, if $f(-1,1)\subset \mathbb C\setminus i\mathbb R^+$, then 
$$
\lim_{\epsilon\to 0^+}\sum_{k=i}^{j-1} \mathrm{Ind}(f\circ l_{k,\epsilon}^+)= \lim_{\epsilon\to 0^+} \mathrm{Ind}(f\circ [r_j+\epsilon,r_i-\epsilon])\leq 1
$$
since $f\circ [r_j,r_i]\subset \mathbb C\setminus i\mathbb R^+$. Similarly, if $f(-1,1)\subset (\mathbb C\setminus L)\cup \{0\}$, then  
$$
\lim_{\epsilon\to 0^+}\sum_{k=i}^{j-1} \mathrm{Ind}(f\circ l_{k,\epsilon}^+)= \lim_{\epsilon\to 0^+} \mathrm{Ind}(f\circ [r_j+\epsilon,r_i-\epsilon])\leq \frac{1}{2}
$$
since $f\circ [r_j,r_i]$ is contained in a half-plane. Furthermore, one can consider the sequence of points $\{-r_j\}$ instead of $\{r_j\}$ and one thus obtains similar estimates. Hence, these estimates yield our assertions.   

Therefore, if $\sum_{n=1}^\infty \kappa(n)+\frac{1}{2}\sum_{n=1}^\infty \tilde \kappa(n) -\sum_{n=1}^\infty \kappa'(n)= +\infty$, then $\sum_{n=1}^\infty \kappa(n)+\frac{1}{2}\sum_{n=1}^\infty \tilde \kappa(n) -\sum_{n=1}^\infty s(n)=+\infty$, and thus by (\ref{eq6}) one has $I(r_n^+) \to-\infty$. Consequently, by Lemma \ref{le1} $\mathrm{(ii)}$, it follows that $I(r)$ is bounded from above. This proves the assertion for the case $\mathrm{(iii)}$.

For the case $\mathrm{(ii)}$, since $\sum_{n=1}^N\tilde \kappa(n)\geq 2\sum_{n=1}^N\kappa'(n)$, it follows from (\ref{eq6}) that either $\{I(r_n^+)\}$ is bounded or $I(r_n^+)\to -\infty$ as $n\to \infty$. So by Lemma \ref{le2}, $I(r)$ is bounded from above.

Next, if  $\Gamma$ is an infinite  cone $(\mathbb C\setminus L)\cup \{0\}$, where $L$ is a line through the origin, then 
$$
\sum_{n=1}^N s(n)\leq \frac{1}{2}\sum_{n=1}^N\kappa'(n)\leq \frac{1}{2}\sum_{n=1}^N\tilde\kappa(n).
$$
Therefore, by (\ref{eq6}) and Lemma \ref{le2} we also have $I(r)$ bounded from above, and hence, $\mathrm{(i)}$ is proved.

Finally, if $\Gamma$ is a half-plane, then $\sum_{n=1}^N s(n)\leq 1/2\sum_{n=1}^N\kappa'(n)$ for any $N\geq 1$. Hence, (\ref{eq6}) implies that $I(r)$ is bounded from above. Thus, this proves $\mathrm{(iv)}$.
\end{proof}

The following lemma is a generalization of Lemma 1 in \cite{La91}.
\begin{lemma}\label{le3}
Suppose that $f\in \mathrm{Hol}(\Delta^+)\cap \mathcal{C}^\infty(\overline{\Delta^+})$ and $f(-1,1)\subset \Gamma:=\mathbb C\setminus i\mathbb R^+$. Suppose also that the set of zeros of $f\mid_{(-1,1)}$ is discrete and its limit point is $0$. If $f$ only vanishes to infinite order at $0$, then 
$$
\limsup_{r\to 0^+} \frac{1}{\ln(1/r)}\int_r^1 \frac{I(t)}{t} dt=+\infty.
$$ 
\end{lemma}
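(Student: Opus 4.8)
The plan is to translate the statement about the index function $I(t)$ into one about the circular means of $\ln|f|$, and then feed in the infinite-order vanishing at $0$. Set
$$
M(r):=\int_0^\pi \ln|f(re^{i\theta})|\,d\theta,\qquad 0<r<1 .
$$
Since $f$ is holomorphic and not identically zero, on each semicircle $\gamma_r$ the zeros of $f$ are isolated and contribute only integrable logarithmic singularities to the integrand, so $M(r)$ is finite for every $r$. The crux of the argument is the identity
\begin{equation}\label{eq:sketchkey}
\int_r^{\rho}\frac{I(t)}{t}\,dt=\frac{1}{2\pi}\bigl(M(\rho)-M(r)\bigr),\qquad 0<r<\rho<1 .
\end{equation}

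To establish \eqref{eq:sketchkey} I would parametrise $\gamma_t$ by $z=te^{i\theta}$, $\theta\in[0,\pi]$, giving
$$
\frac{I(t)}{t}=\frac{1}{2\pi}\int_0^\pi \mathrm{Re}\Bigl[\frac{f'(te^{i\theta})}{f(te^{i\theta})}\,e^{i\theta}\Bigr]\,d\theta
=\frac{1}{2\pi}\int_0^\pi \frac{\partial}{\partial t}\ln|f(te^{i\theta})|\,d\theta,
$$
where I have used that, wherever $f\neq0$, the radial derivative of $\ln|f|$ equals $\mathrm{Re}[(f'/f)e^{i\theta}]$ (only the real part enters, so no branch of the argument is needed). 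On each gap $(r_{n+1},r_n)$ there are no zeros of $f$ on the \emph{closed} semicircle $\gamma_t$ --- all real zeros sit at the $\pm r_m$ and all interior zeros on the $\gamma_{r_m}$ --- so $\ln|f(te^{i\theta})|$ is smooth and bounded in $(t,\theta)$ on compact subintervals; differentiation under the integral sign is then legitimate and yields $I(t)/t=\tfrac{1}{2\pi}M'(t)$ there. Integrating over each gap and summing the finitely many pieces lying between $r$ and $\rho$ (there are finitely many, since $r_n\to 0^+$) gives \eqref{eq:sketchkey}, provided $M$ is continuous across each $r_n$; this last point follows from dominated convergence, the logarithmic singularities of the integrand being uniformly integrable as $t\to r_n^{\pm}$. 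I expect this verification --- differentiation under the integral together with continuity of $M$ across the critical radii --- to be the main technical obstacle.

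The conclusion is then immediate. Because $f$ vanishes to infinite order at $0$, for every $N\in\mathbb N$ there is $\delta_N>0$ with $|f(z)|\le|z|^N$ for $|z|<\delta_N$, whence $\ln|f(re^{i\theta})|\le N\ln r$ and so $M(r)\le \pi N\ln r$ for $r<\delta_N$. As $\ln r<0$, this means $-M(r)\ge \pi N\ln(1/r)$, i.e. $-M(r)/\ln(1/r)\ge \pi N$ for all small $r$; since $N$ is arbitrary, $\limsup_{r\to0^+}\bigl(-M(r)\bigr)/\ln(1/r)=+\infty$. Finally, writing $\int_r^1=\int_r^{\rho}+\int_{\rho}^1$ and invoking \eqref{eq:sketchkey},
$$
\frac{1}{\ln(1/r)}\int_r^1\frac{I(t)}{t}\,dt
=\frac{-M(r)}{2\pi\ln(1/r)}+\frac{M(\rho)}{2\pi\ln(1/r)}+\frac{1}{\ln(1/r)}\int_{\rho}^1\frac{I(t)}{t}\,dt,
$$
where the last two terms involve fixed constants divided by $\ln(1/r)\to+\infty$ and hence tend to $0$, while the first term has upper limit $+\infty$. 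Therefore the left-hand side has $\limsup$ equal to $+\infty$, which is the assertion.
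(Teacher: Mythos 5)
Your proof is correct and follows essentially the same route as the paper: your identity $\int_r^\rho I(t)t^{-1}\,dt=\tfrac{1}{2\pi}\bigl(M(\rho)-M(r)\bigr)$ is exactly the paper's computation of $J(r)$ via $I(t)/t=\tfrac{1}{2\pi}\int_0^\pi \partial_t\ln|f(te^{i\theta})|\,d\theta$ together with the continuity of the circular means of $\ln|f|$ across the critical radii $r_n$ (which the paper checks by factoring out the zeros on $\gamma_{r_n}$, the same content as your dominated-convergence step). The only, harmless, difference is at the end, where you bound $M(r)$ directly from the infinite-order vanishing, while the paper argues by contradiction using the mean value theorem to produce a point with $|f(re^{i\theta_r})|\geq r^C$.
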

\begin{proof}
Without loss of generality, we may assume that there exists a sequence of zeros of $f$ converging to the origin.

Let $\{r_n\}\subset \mathbb R^+$ be a sequence with $r_n \to  0^+$, such that all zeros of $f$ lie on $\cup \gamma_{r_n}$.  Denote $A_n:=\{re^{i t}\colon 0\leq t\leq \pi,~r_{n+1}<r<r_{n}\}$. Then, on each $A_n$ there exists a holomorphic function $\Phi(z):=u_n(z)+iv_n(z)$ such that $f(z)=e^{\Phi(z)}$. One can see that $u_n(z)=\ln |f(z)|$ on $A_n$. Hence, we have
\begin{equation} \label{eq1}
\begin{split}
I(r)&:= \mathrm{Ind}(f\circ \gamma_r)\\
&=\mathrm{Re} \frac{1}{2\pi i}\int_{\gamma_r} \frac{f'(z)}{f(z)} dz=\mathrm{Re} \frac{1}{2\pi i}\int_{\gamma_r} \Phi'(z) dz\\
&=\mathrm{Re} \frac{1}{2\pi i} \big(\Phi(re^{i\pi})-\Phi(r)\big)= \frac{1}{2\pi } \big(v(re^{i\pi})-v(r)\big)\\
&=\frac{1}{2\pi } \int_0^\pi \frac{d}{dt} v(r,t) dt= \frac{1}{2\pi } \int_0^\pi  v_\theta(r,\theta) d\theta \\
&= \frac{1}{2\pi } \int_0^\pi  r u_r(r,\theta) d\theta.
\end{split}
\end{equation}

By Lemma \ref{le1}, $I(r)$ is piecewise continuous on $(0,1]$ and thus it is integrable on $[r,1]$. Therefore, let us denote 
$$
J(r):=\frac{1}{\ln 1/r} \int_r^1 \frac{I(t)}{t} dt.
$$  
Then, by (\ref{eq1}) we have
\begin{equation} \label{eq2}
\begin{split}
J(r)= \frac{1}{\ln 1/r} &\int_r^1 \frac{I(t)}{t} dt\\
= \lim_{\epsilon\to 0^+}\frac{1}{\ln 1/r}&\Big [ \sum_{r<r_n<1}\int_{r_{n}e^{\epsilon}}^{r_{n-1}e^{-\epsilon}} \frac{I(t)}{t}dt+\int_r^{r_{n_0}e^{-\epsilon}} \frac{I(t)}{t}dt  \Big] \\
= \lim_{\epsilon\to 0^+}\frac{1}{\ln 1/r}&\Big [ \sum_{r<r_n<1}\int_{r_{n}e^{\epsilon}}^{r_{n-1}e^{-\epsilon}}  \frac{1}{t} \int_0^\pi t \frac{\partial u_n}{\partial t}(t,\theta) d\theta dt+\int_r^{r_{n_0}e^{-\epsilon}} \frac{1}{t} \int_0^\pi t \frac{\partial u_n}{\partial t}(t,\theta) d\theta dt  \Big] \\
= \lim_{\epsilon\to 0^+}\frac{1}{\ln 1/r}&\Big [ \sum_{r<r_n<1}\int_0^\pi\int_{r_{n}e^{\epsilon}}^{r_{n-1}e^{-\epsilon}}  \frac{\partial u_n}{\partial t}(t,\theta)  dtd\theta\\
&+\int_0^\pi\int_r^{r_{n_0}e^{-\epsilon}}  \frac{\partial u_n}{\partial t}(t,\theta) dtd\theta   \Big]\\
= \lim_{\epsilon\to 0^+}\frac{1}{\ln 1/r}&\Big [ \sum_{r<r_n<1}\int_0^\pi \big( u_n(r_{n-1}e^{-\epsilon},\theta)-u_n(r_{n}e^{\epsilon},\theta)\big) d\theta\\
&+\int_0^\pi \big( u_n(r_{n_0}e^{-\epsilon},\theta)-u_n(r,\theta)\big)d\theta   \Big],
\end{split}
\end{equation}
where $n_0$ is the integer satisfying that $r_{n_0+1}<r<r_{n_0}$.

We will show that $\lim_{\epsilon\to 0^+}\Big(u_n(r_ne^{\epsilon},\theta)-u_n(r_ne^{-\epsilon},\theta)\Big)=0,~0\leq \theta\leq \pi $. Indeed, for each $n$ we can write $f$ in the form 
$$
f(z)=(z-\alpha_1)^{l_1}\cdots (z-\alpha_m)^{l_m} \varphi(z),
$$
where $\alpha_1,\ldots,\alpha_{m}$ are all zeros of $f$ on $\gamma_{r_n}$ and $\varphi$ is a continuous function without zeros on $A_{n-1}\cup A_n \cup \gamma_{r_n}$, holomorphic in the interior. Thus, we obtain that
\begin{equation*} \label{eqq2}
\begin{split}
u_n(r_ne^\epsilon,\theta)-u_{n}(r_ne^{-\epsilon},\theta)&= \sum_{j=1}^m l_j \ln \frac{|(r_ne^\epsilon)e^{i\theta}-\alpha_j|}{|(r_ne^{-\epsilon})e^{i\theta}-\alpha_j|}+\ln \frac{|\varphi((r_n+\epsilon)e^{i\theta})|}{|\varphi((r_n-\epsilon)e^{i\theta})|} \\
&=\sum_{j=1}^m l_j \epsilon+ \ln \frac{|\varphi((r_n+\epsilon)e^{i\theta})|}{|\varphi((r_n-\epsilon)e^{i\theta})|}\to 0
\end{split}
\end{equation*}
as $\epsilon\to 0$, since $\frac{|(r_ne^\epsilon)e^{i\theta}-\alpha_j|}{|(r_ne^{-\epsilon})e^{i\theta}-\alpha_j|}=e^{\epsilon}$. Hence, $\lim_{\epsilon\to 0^+}\big(u(r_ne^{\epsilon},\theta)-u(r_ne^{-\epsilon},\theta)\big)=0$.

Thanks to the fact that $\lim_{\epsilon\to 0^+}\big(u(r_ne^{\epsilon},\theta)-u(r_ne^{-\epsilon},\theta)\big)=0$ for every $n\in \mathbb N$ and by the mean value theorem, (\ref{eq2}) becomes
\begin{equation*} \label{eq3}
\begin{split}
J(r)&=  \frac{1}{\ln 1/r}\int_0^\pi \big( u(1,\theta)-u(r,\theta)\big) d\theta\\
&=\frac{O(1)}{\ln 1/r} -\frac{u(r,\theta_r)}{\ln 1/r},
\end{split}
\end{equation*}
for some $\theta_r\in [0,\pi]$.

Now we shall prove $\limsup_{r\to 0^+} J(r)=+\infty$. Indeed, suppose otherwise that $J(r)$ is bounded from above, i.e. there exists $C>0$ such that 
$$
\frac{-u(r,\theta_r)}{\ln 1/r}\leq C. 
$$
Then, it follows that
$$
\frac{1}{|f(r e^{i\theta_r})|}=e^{-u(r,\theta_r)}\leq  e^{C\ln 1/r}. 
$$
Therefore, $|f(r e^{i\theta_r})|\geq r^C$, which is a contradiction since $f$ vanishes to infinite order at $0$.
\end{proof}

\begin{proof}[Proof of Theorem \ref{Th2}]
Suppose that there exists a non-zero function $f$ satisfying conditions as in Theorem \ref{Th2}. By Remark \ref{remark2}, we may assume that $f$ only vanishes to infinite order at $0$. Therefore, by Lemma \ref{le2} one has that $I(r)$ is bounded from above. This implies that $J(r)$ is also bounded from above, which contradicts Lemma \ref{le3}. Hence, the proof is complete.
\end{proof}

By Theorem \ref{Th2}, we have the following corollary, which gives a confirmative answer to Huang et al.'s conjecture for the case that $f$ is $\mathcal{C}^\infty$-smooth up to the boundary.
\begin{corollary} Suppose that $f$ is a holomorphic function on $\Delta^+$ with $\mathcal{C}^\infty$-smooth extension up to $(-1,1)$ such that $f$ maps $(-1,1)$ into $\Gamma_C$, for some positive $C$. Suppose also that the set of zeros of $f\mid_{(-1,1)}$ is discrete and its limit point is $0$. If $f$ vanishes to  infinite order at $0$, then $f$ vanishes identically.
\end{corollary}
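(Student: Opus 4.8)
The plan is to obtain the corollary as an immediate consequence of Theorem \ref{Th2}; the entire content reduces to the set-theoretic inclusion $\Gamma_C \subset \Gamma_\infty$, valid for every finite $C > 0$.

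First I would verify this inclusion. Take $z \in \Gamma_C$ with $z \ne 0$. Were $\mathrm{Re}(z) = 0$, the defining inequality $|\mathrm{Im}(z)| \le C|\mathrm{Re}(z)|$ would force $|\mathrm{Im}(z)| \le 0$, hence $z = 0$, a contradiction. Thus every nonzero point of $\Gamma_C$ satisfies $\mathrm{Re}(z) \ne 0$, so that $\Gamma_C \setminus \{0\} \subset \mathbb C \setminus i\mathbb R$; since $0 \in \Gamma_\infty$ as well, we conclude $\Gamma_C \subset \Gamma_\infty$.

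With the inclusion in hand, the hypothesis $f(-1,1) \subset \Gamma_C$ upgrades at once to $f(-1,1) \subset \Gamma_\infty$. Every remaining assumption of Theorem \ref{Th2} is already granted in the corollary: $f$ is holomorphic on $\Delta^+$, smooth up to the diameter $(-1,1)$, the zero set of $f\mid_{(-1,1)}$ is discrete with limit point $0$, and $f$ vanishes to infinite order at $0$. Invoking Theorem \ref{Th2} then gives $f \equiv 0$.

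The sole point deserving comment is the mild discrepancy in the smoothness hypotheses: Theorem \ref{Th2} is stated with $f \in \mathcal{C}^\infty(\overline{\Delta^+})$, whereas the corollary assumes only a $\mathcal{C}^\infty$-smooth extension up to $(-1,1)$. As noted in the remarks following Theorem \ref{Th2}, the full closure-smoothness is never actually exploited; the lemmas underlying Theorem \ref{Th2}, and in particular the existence of the one-sided limits $I(r_n^\pm)$, require only regularity up to the open diameter, which the corollary supplies. Consequently there is no genuine obstacle here, and the corollary follows directly.
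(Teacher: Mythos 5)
Your proposal is correct and matches the paper exactly: the paper gives no separate proof, deducing the corollary immediately from Theorem \ref{Th2} via the inclusion $\Gamma_C\subset\Gamma_\infty$, which you verify explicitly. (Your aside slightly misreads Remark 3 --- the paper says the $\mathcal{C}^\infty(\overline{\Delta^+})$ hypothesis \emph{is} used for the one-sided limits $I(r_n^\pm)$ --- but since all the relevant circles $\gamma_{r_n}$ meet the boundary only along $(-1,1)$, smoothness up to the diameter is indeed what is actually needed, so no gap results.)
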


However, at present we do not know whether Theorem \ref{Th2} holds for the case $f(-1,1)\subset \mathbb C\setminus i\mathbb R^+$.
\section{Asymptotic behaviour of the Riemann mapping function at a cusp}\label{s3}
In this section, we recall some results that pertain to the behaviour of the Riemann mapping function at a cusp and prove a result relating to the boundary uniqueness of holomorphic functions. Moreover, an example concerning Huang et al.'s conjecture is also given.

First of all, we now recall the general H\"{o}lder continuity (see \cite{Khanh13}). 
\begin{define}[\cite{Khanh13}]
Let $F$ be an increasing function such that $\lim_{t\to +\infty}F(t)=+\infty$. For $\Omega\subset \mathbb C^n$, define $F$-H\"{o}lder space on $\bar \Omega$ by
$$
\Lambda^F(\overline{\Omega})=\{u\colon \|u\|_\infty+\sup_{z,z+h\in \bar\Omega}F(|h|^{-1})|u(z+h)-u(z)|<+\infty\}.
$$
\end{define}
Notice that the $F$-H\"{o}lder space includes the standard H\"{o}lder space $\mathcal{C}^{0,\alpha}(\overline{\Omega})$ by taking $F(t)=t^\alpha$ with $0<\alpha<1$.

\begin{define} Let $f,g: \Omega \to \mathbb C$ be two infinitesimal functions as $x \to a$ satisfying that $g(x)\ne 0$ for any $x\in \Omega$. We say that $f$ is infinitesimal with respect to $g$ as $x\to a$ if 
$$    
\lim_{\Omega\ni x\to a}\frac{|f(x)|}{|g(x)|^N} = 0, 
$$
for every $N\in \mathbb N$. 
\end{define}

The well-known Riemann Mapping Theorem states that there exists a conformal map of a simply connected proper domain of the complex plane onto the unit disc. This mapping is known as a Riemann mapping function. The rest of this section is devoted to a study of the extendability of a Riemann mapping function to the boundary.

 In the case of a simply connected bounded domain with smooth boundary, the Riemann mapping function extends smoothly to the boundary of the domain. For the case of singular boundary point, L. Lichtenstein and S. Warschawski \cite{Wa42, Wa55, Li11} investigated the asymptotic behaviour of the mapping function at an analytic corner, where the opening angle of two regular analytic arcs is greater than $0$. The mapping function behaves like $z^{1/\alpha}$, where $\pi \alpha$ is the opening angle of the analytic corner. For more details we refer the reader to Pommerenke \cite[Chapter 3]{Po91}. When the opening angle vanishes, we recall the definition of analytic cusps. 
 \begin{define}[\cite{KL16, Pr15}] One says that $\Omega$ has an analytic cusp at $0$ if the boundary of $\Omega$ at $0$ has two regular analytic curves such that the openning angle of $\Omega$ at $0$ vanishes.
\end{define}

It is shown in \cite{KL16} that, after a change of variable if necessary, one can consider a domain $\Omega$ with an analytic cusp at $0$ given by $\Omega=\{0<x<1, 0<y<\alpha(x)\}$, where $\alpha:(-1,1)\to \mathbb R$ is a real-analytic curve. Let $\Phi: \Omega\to \Delta^+$ be a Riemann mapping function which maps $0$ to $0$.

In the case $\partial\Omega$ has an analytic cusp at $0$ satisfying that $\alpha(t)=\sum_{j=N}^\infty a_j t^j$, T. Kaiser and S. Lehner \cite{KL16} achieved an asymptotic behaviour of the Riemann mapping function
$$
\Phi(z)\sim z^\sigma \exp \Big(\frac{c_0}{z^N}+\frac{c_1}{z^{N-1}}+\cdots+\frac{c_{N-1}}{z}\Big),
$$
for some $\sigma\in \mathbb R$. In general,  S. Warschwaski \cite[Theorem XI(A) and Theorem XI(B)]{Wa42} proved that the Riemann mapping function $\Phi$ satisfies 
$$
|\Phi(z)|\sim  \exp \Big(-\pi \int_t^a \frac{dr}{r \alpha(r)}\Big)=: F(t).
$$
In other words, $\Phi$ is $F$-H\"{o}lder on $\overline{\Omega}$.
\begin{lemma} Let $D$ be a domain in $\mathbb C$ with $0\in \partial D$ and let $\Phi:D \to \Delta^+$ be a Riemann mapping function such that $\Phi(0)=0$. If $f: D \to \mathbb C$ is infinitesimal with respect to $\Phi$, then $f\circ \Phi^{-1}$ vanishes to infinite order at $z=0$.
\end{lemma}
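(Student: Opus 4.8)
The plan is to reduce the statement to a single change of variables, using the bijectivity of $\Phi$ to translate between the two notions of vanishing. I would set $w=\Phi^{-1}(z)$ for $z\in\Delta^+$, so that $z=\Phi(w)$ and, crucially, $|z|=|\Phi(w)|$. Since $f$ is infinitesimal with respect to $\Phi$ as $w\to 0$, the hypothesis reads
\[
\lim_{D\ni w\to 0}\frac{|f(w)|}{|\Phi(w)|^N}=0\qquad\text{for every }N\in\mathbb N,
\]
while the conclusion to be reached is that, for every $N$,
\[
\lim_{\Delta^+\ni z\to 0}\frac{(f\circ\Phi^{-1})(z)}{|z|^N}=0.
\]
Rewriting the latter quotient through $z=\Phi(w)$ yields exactly
\[
\frac{(f\circ\Phi^{-1})(z)}{|z|^N}=\frac{f(w)}{|\Phi(w)|^N},
\]
so the two expressions coincide term by term under the substitution. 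The identity $|z|=|\Phi(w)|$ is what makes the exponent-$N$ comparison transparent and is the algebraic heart of the argument.

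The one nontrivial point, which I expect to be the main obstacle, is the compatibility of the two limiting processes: I must verify that $z\to 0$ in $\Delta^+$ forces $w=\Phi^{-1}(z)\to 0$ in $D$. This is where I would invoke the boundary behaviour of the Riemann mapping recalled earlier in this section. Since $\Phi$ is conformal with $\Phi(0)=0$ and extends continuously to the distinguished boundary point (by Carath\'eodory's theorem, respectively by the analytic-cusp asymptotics of Warschawski and Kaiser--Lehner cited above), its inverse $\Phi^{-1}$ is continuous at $0$ with $\Phi^{-1}(0)=0$. Consequently, for any sequence $z_k\to 0$ in $\Delta^+$ we have $w_k=\Phi^{-1}(z_k)\to 0$ in $D$, so that the two limiting processes are genuinely the same limit read through the change of variable.

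With this in hand the proof is immediate. I would fix $N\in\mathbb N$; given $z\to 0$ in $\Delta^+$, set $w=\Phi^{-1}(z)$, which tends to $0$ by the preceding paragraph, and estimate
\[
\left|\frac{(f\circ\Phi^{-1})(z)}{|z|^N}\right|=\frac{|f(w)|}{|\Phi(w)|^N}\longrightarrow 0
\]
by the infinitesimality hypothesis. Since $N$ was arbitrary, $f\circ\Phi^{-1}$ vanishes to infinite order at $z=0$, as desired. Everything beyond the justification of $\Phi^{-1}(z)\to 0$, namely the continuity of the inverse conformal map at the distinguished point, is a routine verification.
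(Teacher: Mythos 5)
Your proposal is correct and follows essentially the same route as the paper: the substitution $w=\Phi^{-1}(z)$ together with the identity $|z|=|\Phi(\Phi^{-1}(z))|$ converts the infinitesimality hypothesis directly into infinite-order vanishing of $f\circ\Phi^{-1}$. The only difference is that you explicitly justify the equivalence of the two limiting processes via continuity of $\Phi^{-1}$ at $0$, a point the paper passes over silently, so if anything your write-up is slightly more careful.
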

\begin{proof}
Since $f: D \to \mathbb C$ is infinitesimal with respect to $\Phi$, it follows, for any $n\in\mathbb N$, that 
\begin{equation*}
\begin{split}
\lim_{z\to 0}\frac{f\circ \Phi^{-1}(z)}{|z|^n}=\lim_{z\to 0}\frac{f\circ \Phi^{-1}(z)}{|\Phi\circ \Phi^{-1} (z)|^n}=\lim_{w\to 0}\frac{f(w)}{|\Phi(w)|^n}=0.
\end{split}
\end{equation*}
Therefore, $f\circ \Phi^{-1}$ vanishes to infinite order at $z=0$, and hence the proof is complete.
\end{proof}
We recall the following corolary which is given in \cite[Claim 2.3]{DK15}.
\begin{corollary}[See \cite{DK15}]\label{Cor2} Let $D$ be a domain in $\mathbb C$ with $0\in \partial D$ and let $\Phi:D \to \Delta^+$ be a Riemann mapping function such that $\Phi(0)=0$. If $\partial D \in \mathcal{C}^{0,\alpha}$, then $\Phi^{-1}\in \mathcal{C}^{0,\alpha} (\overline{\Delta^+})$ and $f\circ \Phi^{-1}$ vanishes to infinite order at $0$ for any function $f:D \to \mathbb C$ vanishing to infinite order at $0$. 
\end{corollary}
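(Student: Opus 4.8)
The plan is to establish the two assertions of the corollary in turn, deriving the second from the first together with the lemma proved just above. The first assertion, that $\Phi^{-1}$ extends to an element of $\mathcal{C}^{0,\alpha}(\overline{\Delta^+})$, is a pure boundary-regularity statement for the conformal map; the second, that $f\circ\Phi^{-1}$ vanishes to infinite order whenever $f$ does, will then follow once we extract from this regularity a quantitative lower bound on $|\Phi(x)|$ near the origin and feed it into the preceding lemma.

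First I would treat the regularity of $\Phi^{-1}$. Since $\partial D\in\mathcal{C}^{0,\alpha}$, the domain $D$ is in particular a Jordan domain, so by Carath\'eodory's theorem $\Phi^{-1}$ already extends to a homeomorphism of $\overline{\Delta^+}$ onto $\overline{D}$. To upgrade mere continuity to H\"older continuity I would invoke the classical boundary-distortion estimates for conformal maps onto domains with H\"older boundary (in the spirit of Warschawski's estimates \cite{Wa42} and Pommerenke \cite[Chapter 3]{Po91}): the $\mathcal{C}^{0,\alpha}$ regularity of $\partial D$ controls the modulus of continuity of the map at the boundary, yielding a constant $C>0$ with $|\Phi^{-1}(w)-\Phi^{-1}(w')|\leq C|w-w'|^{\alpha}$ for all $w,w'\in\overline{\Delta^+}$. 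This is the step I expect to be the main obstacle, since it rests on nontrivial estimates for the conformal map at a possibly singular boundary point; it is precisely the content imported from \cite{DK15}.

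Granting $\Phi^{-1}\in\mathcal{C}^{0,\alpha}(\overline{\Delta^+})$, the remainder is elementary. Taking $w'=0$ and using $\Phi^{-1}(0)=0$ gives $|\Phi^{-1}(w)|\leq C|w|^{\alpha}$; substituting $w=\Phi(x)$ yields $|x|\leq C|\Phi(x)|^{\alpha}$, hence a lower bound $|\Phi(x)|\geq c\,|x|^{1/\alpha}$ near $0$ for some $c>0$. Now let $f:D\to\CC$ vanish to infinite order at $0$ and fix $N\in\mathbb N$. Choosing an integer $M\geq N/\alpha$, for $|x|<1$ small one has $|\Phi(x)|^{N}\geq c^{N}|x|^{N/\alpha}\geq c^{N}|x|^{M}$, so that
\[
\frac{|f(x)|}{|\Phi(x)|^{N}}\leq \frac{1}{c^{N}}\,\frac{|f(x)|}{|x|^{M}}\longrightarrow 0\quad\text{as }x\to 0,
\]
because $f$ vanishes to infinite order. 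Thus $f$ is infinitesimal with respect to $\Phi$ in the sense of the definition preceding the lemma, and that lemma immediately gives that $f\circ\Phi^{-1}$ vanishes to infinite order at $0$. This would complete the proof.
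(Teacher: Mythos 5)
The paper offers no argument for this corollary at all: it is recalled verbatim from \cite{DK15} (Claim 2.3), so there is no ``paper proof'' to match. Your proposal is correct and is in fact more informative than the paper. Your treatment of the second assertion is a genuine, complete derivation: from $\Phi^{-1}\in\mathcal{C}^{0,\alpha}(\overline{\Delta^+})$ and $\Phi^{-1}(0)=0$ you get $|x|\leq C|\Phi(x)|^{\alpha}$, hence $|\Phi(x)|\geq c|x|^{1/\alpha}$, which shows that any $f$ vanishing to infinite order at $0$ is infinitesimal with respect to $\Phi$ in the sense of the paper's definition; the preceding lemma then yields that $f\circ\Phi^{-1}$ vanishes to infinite order. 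That chain of inequalities is sound (including the comparison $|x|^{N/\alpha}\geq|x|^{M}$ for $M\geq N/\alpha$ and $|x|\leq 1$). The only caveat is the first assertion: your sentence that the $\mathcal{C}^{0,\alpha}$ regularity of $\partial D$ ``controls the modulus of continuity'' with the \emph{same} exponent $\alpha$ is a heuristic, not a proof --- in general the H\"older exponent of a conformal map is not that of the boundary (think of the power-type behaviour $z^{\beta}$ at a corner of opening $\pi\beta$), and establishing the stated exponent is precisely the nontrivial content of \cite{DK15} and of the Warschawski-type estimates. Since you explicitly flag this step as imported from the citation, exactly as the paper does, this is not a gap relative to the paper; just be aware that the exponent bookkeeping there is the delicate point, not a routine consequence of the boundary being H\"older.
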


\begin{example}
Let $\Omega$ be the domain defined by
$$
\Omega:=\{z\in \mathbb C\colon 0<x<1,~0<y<x^2\}.
$$

By \cite[Theorem XI(A) and Theorem XI(B)]{Wa42} (see also the proof of Proposition $2.2$ in \cite{KL16}) and a computaion, we obtain
$$
|\Phi(z)|\sim  \exp \Big(-\pi \int_t^a \frac{dr}{r^3}\Big)=\exp \Big(\frac{\pi}{2}\big(\frac{1}{a^2}-\frac{1}{t^2}\big) \Big)\approx \exp(-\frac{\pi}{2t^2}).
$$
Then, the function defined on $\mathbb C$ by
\begin{equation*}
f(z)=
\begin{cases}
\exp(-C/|z|^\alpha)&~\text{if}~ z\ne 0\\
0&~\text{if}~ z= 0
\end{cases},
\end{equation*}
where $C>\pi/2$ for $\alpha=2$ and $C>0$ for $\alpha >2$, is infinitesimal with respect to $\Phi$. Moreover, $f\circ \Phi^{-1}$ vanishes to infinite order at $0$.
\end{example}

We now prove the main result of this section.
\begin{proposition}\label{pro5} Let $D\subset \mathbb C$ be a bounded domain in $\mathbb C$ with $0\in \partial D$ and let $\psi: \Delta^+\to D $ be a Riemann mapping function that is $\mathcal{C}^0$ up to $(-1,1)\in \partial \Delta^+$ satisfying $\psi(0)=0$. Assume that $f$ be a holomorphic function defined on $D$ that  is $\mathcal{C}^0$ up to $\partial D$ such that 
$$
\mathrm{Re}f(p)\ne 0,~\forall p\in D.
$$
If $f$ is infinitesimal with respect to $\psi^{-1}$ and vanishes to infinite order at $0$, then $f\equiv 0$. 
\end{proposition}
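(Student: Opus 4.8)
The plan is to transport the problem from $D$ to the half-disc $\Delta^+$ through the Riemann map and then invoke the Baouendi--Rothschild theorem (Theorem~\ref{T2}). Concretely, I would set $g:=f\circ \psi$. Since $f$ is holomorphic on $D$ and $\psi$ is a conformal equivalence of $\Delta^+$ onto $D$, the composition $g$ is holomorphic on $\Delta^+$. Because $\psi$ is $\mathcal{C}^0$ up to $(-1,1)$ and carries $(-1,1)$ into $\partial D$, while $f$ is $\mathcal{C}^0$ up to $\partial D$, the function $g$ extends continuously to $(-1,1)$. Thus $g$ already satisfies the holomorphy and boundary-regularity hypotheses of Theorem~\ref{T2}, and it remains only to verify the sign condition on $\mathrm{Re}\,g$ and the infinite-order vanishing of $g$ at $0$.

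For the sign condition, I would use that $\psi(\Delta^+)=D$, so the hypothesis $\mathrm{Re}\,f(p)\ne 0$ for all $p\in D$ translates into $\mathrm{Re}\,g(z)\ne 0$ for all $z\in \Delta^+$. As $\mathrm{Re}\,g$ is continuous and $\Delta^+$ is connected, $\mathrm{Re}\,g$ has constant sign; replacing $f$ by $-f$ if necessary (which alters neither the hypotheses nor the conclusion), I may assume $\mathrm{Re}\,g>0$ on $\Delta^+$. Passing to boundary limits then yields $\mathrm{Re}\,g(x)\ge 0$ for every $x\in(-1,1)$, exactly the inequality required by Theorem~\ref{T2}.

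For the vanishing, I would appeal to the Lemma preceding Corollary~\ref{Cor2} with $\Phi:=\psi^{-1}$, which is a Riemann map $D\to\Delta^+$ with $\Phi(0)=0$. The hypothesis that $f$ is infinitesimal with respect to $\psi^{-1}$ is precisely the hypothesis of that lemma, whence $g=f\circ(\psi^{-1})^{-1}=f\circ\psi$ vanishes to infinite order at $z=0$. (The separately assumed infinite-order vanishing of $f$ at $0\in\partial D$ guarantees in particular that $f(0)=0$, so that $g$ and its boundary extension are compatible at the cusp.)

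With all four hypotheses of Theorem~\ref{T2} verified for $g$, that theorem forces $g\equiv 0$ on $\Delta^+$; since $\psi$ maps $\Delta^+$ onto $D$, this gives $f\equiv 0$ on $D$, and then $f\equiv 0$ on $\overline{D}$ by continuity. The argument is in essence a matter of pushing the given structure through the conformal map, so the only point that genuinely requires care is the boundary correspondence: one must be sure that $\psi$ sends $(-1,1)$ into $\partial D$ and that $\psi^{-1}$ extends continuously to $0$ with $\psi^{-1}(0)=0$, so that both the composition $f\circ\psi$ and the infinitesimal comparison with $\psi^{-1}$ are legitimate near the cusp. This is where the stated $\mathcal{C}^0$ boundary regularity of $\psi$ (a Carath\'eodory-type boundary extension) enters, and I expect it to be the only delicate step.
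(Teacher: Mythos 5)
Your proposal is correct and follows essentially the same route as the paper: compose with $\psi$, use the infinitesimal hypothesis (via the lemma preceding Corollary \ref{Cor2}) to get infinite-order vanishing of $f\circ\psi$ at $0$, normalize the sign of $\mathrm{Re}\,f$, and apply Theorem \ref{T2}. Your added justification of the sign normalization by connectedness is a small but welcome elaboration of the paper's ``without loss of generality'' step.
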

\begin{proof}
Without loss of generality, we may assume that 
$$
\mathrm{Re}(p)>0,~\forall p\in D.
$$
Since $f$ is infinitesimal with respect to $\psi^{-1}$, $(f \circ \psi) \in \mathrm{Hol}(\Delta^+
)\cap \mathcal C^{0}(\overline{\Delta^+})$
and $(f \circ \psi$) vanishes to infinite order at $0$. Moreover, from the hypothesis 
$$
\mathrm{Re}f(p)>0,~\forall p\in D,
$$
one has $\mathrm{Re}(f \circ \psi)(x)\geq 0$ with $x:=\mathrm{Re}(z)\in (-1,1)$. Therefore, by Theorem \ref{T2} we conclude $f\circ \psi\equiv 0$, and thus $f\equiv 0$.
\end{proof}

As an application, we obtain the following corollary which is a generalization of \cite[Lemma 2.4]{DK15}. 
\begin{corollary} \label{Cor3} Let $f$ be a holomorphic function defined on the upper half-disk $\Delta^+$ that  is $\mathcal{C}^0$  up to $(-1,1)\subset \partial\Delta^+$. Assume that there exists a simply connected domain $\hat{B}\subset \Delta^+$, with boundary $\partial \hat{B}\ni 0$, of class $\mathcal{C}^{0,\alpha}$, for some $\alpha>0$, such that 
$$
\mathrm{Re}f(p)\ne 0,~\forall p\in \hat{B}.
$$
If $f$ vanishes to infinite order at $0$, then $f\equiv 0$. 
\end{corollary}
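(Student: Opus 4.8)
The plan is to obtain Corollary \ref{Cor3} as a direct application of Proposition \ref{pro5} to the subdomain $D=\hat{B}$, with the infinitesimal hypothesis of that proposition verified through the boundary regularity supplied by Corollary \ref{Cor2}, and then to spread the vanishing from $\hat{B}$ to all of $\Delta^+$ by the identity theorem. Concretely, I would take $D=\hat{B}$ and let $\Phi\colon \hat{B}\to \Delta^+$ be a Riemann mapping function normalised so that the boundary point $0\in\partial\hat{B}$ corresponds to $0\in\partial\Delta^+$, that is $\Phi(0)=0$; write $\psi:=\Phi^{-1}\colon \Delta^+\to \hat{B}$. Since $\partial\hat{B}\in\mathcal{C}^{0,\alpha}$, Corollary \ref{Cor2} gives $\psi=\Phi^{-1}\in\mathcal{C}^{0,\alpha}(\overline{\Delta^+})$, so in particular $\psi$ is $\mathcal{C}^0$ up to $(-1,1)$ and $\psi(0)=0$, which are exactly the standing requirements on the Riemann map in Proposition \ref{pro5}.

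Next I would use the H\"older estimate to check that $f|_{\hat{B}}$ is infinitesimal with respect to $\psi^{-1}$. From $|\psi(w)|=|\psi(w)-\psi(0)|\le C|w|^\alpha$, evaluated at $w=\psi^{-1}(z)$, one gets $|z|\le C\,|\psi^{-1}(z)|^\alpha$, hence $|\psi^{-1}(z)|\ge (|z|/C)^{1/\alpha}$. Consequently, for every $N\in\mathbb{N}$,
\[
\frac{|f(z)|}{|\psi^{-1}(z)|^{N}}\le C^{N/\alpha}\,\frac{|f(z)|}{|z|^{N/\alpha}}\longrightarrow 0\qquad(z\to 0),
\]
because $f$ vanishes to infinite order at $0$; thus $f$ is infinitesimal with respect to $\psi^{-1}$, and this is precisely what lets Proposition \ref{pro5} upgrade the infinite-order vanishing of $f$ to that of $f\circ\psi$. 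Combined with $\mathrm{Re}f(p)\ne 0$ on $\hat{B}$, Proposition \ref{pro5} then yields $f\equiv 0$ on $\hat{B}$; and since $f$ is holomorphic on the connected open set $\Delta^+$ and vanishes on the nonempty open subset $\hat{B}$, the identity theorem forces $f\equiv 0$ on $\Delta^+$.

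The main obstacle is not conceptual but a matter of boundary regularity: Proposition \ref{pro5} asks for $f$ to be $\mathcal{C}^0$ up to all of $\partial\hat{B}$, whereas the hypotheses only furnish continuity of $f$ on $\Delta^+\cup(-1,1)$ and say nothing along the circular arc $\{|z|=1,\ \mathrm{Im}\,z\ge 0\}$. Since only the behaviour of $f$ near $0$ is relevant, I would first replace $\hat{B}$ by a smaller simply connected subdomain $B'\subset\hat{B}$ with $0\in\partial B'$, still bounded by a $\mathcal{C}^{0,\alpha}$ curve near $0$ and with $\overline{B'}\setminus\{0\}\subset\Delta^+\cup(-1,1)$; this can be arranged because near $0$ the boundary of $\hat{B}$ is a $\mathcal{C}^{0,\alpha}$ arc through the origin and $\hat{B}$ lies locally inside $\Delta^+$. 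On such a $B'$ the function $f$ is automatically continuous up to $\partial B'$, so the argument above applies verbatim with $\hat{B}$ replaced by $B'$, and the conclusion $f\equiv 0$ on $B'$ again propagates to all of $\Delta^+$. One should also record that the existence of the normalised Riemann map and its continuous extension sending $0$ to $0$ (Carath\'eodory's theorem, as already exploited in Corollary \ref{Cor2}) is what makes this boundary correspondence legitimate.
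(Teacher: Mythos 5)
Your proposal follows essentially the same route as the paper: transport $f$ to $\Delta^+$ via the Riemann map of $\hat B$, use Corollary \ref{Cor2} to obtain the H\"older regularity of $\psi$ and the infinite-order vanishing of $f\circ\psi$ at $0$, and conclude by Proposition \ref{pro5}. The additional care you take --- verifying the infinitesimal hypothesis explicitly from the H\"older bound, shrinking $\hat B$ so that $f$ is genuinely continuous up to the boundary of the subdomain, and spelling out the identity-theorem step --- only makes explicit details that the paper's one-line proof leaves implicit.
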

\begin{proof}
Without loss of generality, we may assume that 
$$
\mathrm{Re}(p)>0,~\forall p\in \hat{B}.
$$
Then, by Corollary \ref{Cor2}, one can find a biholomorphism $\psi:\Delta^+\to \hat{B}$, which is $\mathcal{C}^{0,\alpha}$ up to the boundary, such that $(f \circ \psi) \in \mathrm{Hol}(\Delta^+
)\cap \mathcal C^{0,\alpha}(\overline{\Delta^+})$,
and such that $(f \circ \psi$) vanishes to infinite order at $0$. Therefore, Proposition \ref{pro5} tells us that $f\equiv 0$. 
\end{proof}

Now, we assume that $f\in \mathrm{Hol}(\Delta^+)\cap \mathcal{C}^\infty(\overline{\Delta^+})$ and $f$ vanishes to infinite order at $0$. Then, it follows from Corollary \ref{Cor3} that there exists a sequence $\{p_j\}\subset \Delta^+$ such that $p_j\to 0$ and $\mathrm{Re}~f(p_j)=0$. In addition,  A. Daghighi and S. Krantz \cite{DK16} recently proved that either $f\equiv 0$ or there is a sequence in $\Delta^+$, converging to $0$, along which $\mathrm{Im}(z)/\mathrm{Re}(z)$ is unbounded.

It is a natural question whether there exists a subdomain $\hat{B}$ with $0\in \partial\hat B$ such that $\mathrm{Re}(f(z))\ne 0$ for all $z\in \hat B$. However, the following example points out that one cannot remove the condition $f(-1,1)\subset \Gamma_\infty$ in Theorem \ref{Th2} and there is no such a subdomain $\hat B\subset \Delta^+$ with $0\in \partial\hat B$ such that $\mathrm{Re}(f(z))\ne 0$ on $\hat B$.

In order to introduce the example, we need the following well-known result. 
 \begin{theorem}[See Theorem $9.1.5$ in \cite{GK97}]\label{T5} If $\{a_j\}_{j=1}^\infty\subset \Delta$ (with possible repetitions) satisfies 
 $$
 \sum_{j=1}^\infty (1-|a_j|)<+\infty
 $$
and no $a_j=0$, then there is a bounded holomorphic fuction on $\Delta$ which has zero set consisting precisely of the $a_j$'s, counted according to their multiplicities. Specifically, the infinite product 
$$
\prod_{j=1}^\infty \frac{-a_j}{|a_j|} B_{a_j}(z)
$$ 
converges uniformly on compact subsets of $\Delta$ to a bounded holomorphic function $B(z)$, where $B_{a_j}(z)=\dfrac{z-a_j}{1-\bar a_j z}$. The zeros of $B$ are precisely the ${a_j}$'s, counted according to their multiplicities.
\end{theorem}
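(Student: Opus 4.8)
This is the classical Blaschke product theorem, so the plan is to show directly that the displayed infinite product converges locally uniformly on $\Delta$ to a bounded holomorphic function whose zero set is precisely $\{a_j\}$. Denote by $b_j(z):=c_j B_{a_j}(z)$ the $j$-th factor, where $c_j$ is the unimodular constant (namely $c_j=-\bar a_j/|a_j|$) chosen so that $b_j(0)=|a_j|>0$; this precise choice is what will force $b_j\to 1$, not merely $|b_j|\to 1$, as $|a_j|\to 1$. First I would record the elementary properties of each factor: $B_{a_j}$ is a M\"obius transformation holomorphic on a neighborhood of $\overline\Delta$, since its pole $1/\bar a_j$ satisfies $|1/\bar a_j|>1$; it maps $\Delta$ into $\Delta$ with $|b_j(z)|<1$ for $z\in\Delta$ and $|b_j(z)|=1$ for $|z|=1$ (as $|c_j|=1$); and its only zero in $\Delta$ is a simple zero at $a_j$.

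The core of the proof is convergence of the product, and for this I would invoke the standard criterion that $\prod_j(1+u_j)$ converges locally uniformly to a holomorphic function whenever $\sum_j|u_j|$ converges locally uniformly, where here $u_j:=b_j-1$. Everything therefore reduces to a single estimate. Clearing denominators in $1-b_j(z)$ and collecting terms lets one factor $(1-|a_j|)$ out of the numerator, giving the identity $1-b_j(z)=(1-|a_j|)\dfrac{|a_j|+\bar a_j z}{|a_j|(1-\bar a_j z)}$, and hence
\[
|1-b_j(z)|\le \frac{1+|z|}{1-|z|}\,(1-|a_j|).
\]
Thus on each disc $\{z\colon |z|\le r\}$ with $r<1$ one has $\sum_j\sup_{|z|\le r}|1-b_j(z)|\le \frac{1+r}{1-r}\sum_j(1-|a_j|)<\infty$ by hypothesis, so the product converges uniformly there and its limit $B$ is holomorphic on $\Delta$.

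Boundedness is then immediate, since each partial product has modulus $\le 1$ on $\Delta$ (every $|b_j|\le 1$ there) and the bound survives in the limit, so $|B|\le 1$. To locate the zeros I would fix a compact set $K\subset\Delta$; because $\sum(1-|a_j|)<\infty$ forces $|a_j|\to 1$, only finitely many $a_j$ lie in $K$, so I split the product into that finite part and the tail. The tail converges to a non-vanishing holomorphic function on a neighborhood of $K$ --- the standard consequence of $\sum|u_j|<\infty$ --- whence on $K$ the zeros of $B$, counted with multiplicity, are exactly the $a_j\in K$ repeated according to their multiplicity in the sequence. The only genuinely delicate point, and the one I would flag as the main obstacle, is the bookkeeping of the unimodular normalizing constant $c_j$: it must be chosen so that each factor tends to $1$ at the origin, since otherwise the factored estimate above --- the engine of the whole argument --- fails and the product need not converge.
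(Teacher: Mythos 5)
The paper offers no proof of this statement at all: it is quoted as Theorem $9.1.5$ of \cite{GK97} and used as a black box in the example that follows it, so there is no internal argument to compare yours against. Your proof is the standard, correct textbook argument for the Blaschke product theorem: the identity $1-b_j(z)=(1-|a_j|)\,\dfrac{|a_j|+\bar a_j z}{|a_j|(1-\bar a_j z)}$ checks out, the resulting bound $\sup_{|z|\le r}|1-b_j(z)|\le \frac{1+r}{1-r}(1-|a_j|)$ gives normal convergence of $\sum(1-b_j)$ on each disc $|z|\le r$, and the boundedness ($|B|\le 1$ as a limit of partial products of modulus $\le 1$) and the zero-location argument (finitely many $a_j$ in any compact $K$ since $|a_j|\to 1$, plus a non-vanishing tail) are the usual and correct ones. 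One point you handle silently but should flag: you take the unimodular constant to be $c_j=-\bar a_j/|a_j|$, whereas the statement as printed has $-a_j/|a_j|$. Your choice is the right one --- it is exactly what makes $b_j(0)=|a_j|>0$, which is the hinge of the factored identity and hence of the convergence; with the constant as printed the factor at $z=0$ equals $a_j^2/|a_j|$, and the product of the extra phases $e^{2i\arg a_j}$ need not converge, so the displayed formula in the statement contains a typo that your proof implicitly corrects. Since you correctly identify this normalization as ``the engine of the whole argument,'' it would strengthen the write-up to state explicitly that the constant in the quoted statement must be conjugated.
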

\begin{example} Denote $\mathbb H:=\{z\in \mathbb C\colon \mathrm{Im}(z)>0\}$ the upper half plane and consider the sequence $\{a_{m,n}\}_{m\in \mathbb Z, n\in \mathbb N^*}\subset \mathbb H$ given by
$$
a_{m,n}=\frac{1}{m^3-in^3}\in \mathbb H.
$$
Let $\psi: \mathbb H\to \Delta$ be the biholomorphism defined by 
$$
\psi(z)=\frac{z-i}{z+i}
$$  
and let us denote
 $$
 \alpha_{m,n}:=\psi(a_{m,n})= \frac{1-n^3-im^3}{1+n^3+im^3}=\frac{1-m^6-n^6-2im^3}{(1+n^3)^2+m^6}.
 $$
Then, a computation shows that 
\begin{equation*}
\begin{split}
1-|\alpha_{m,n}|^2&=\frac{(m^6+n^6+2n^3+1)^2-(m^6+n^6-1)^2-4m^6}{(m^6+n^6+2n^3+1)^2}\\
&=\frac{4(m^6+n^6+n^3)(n^3+1)-4m^6}{(m^6+n^6+2n^3+1)^2}\\
&=\frac{4n^3(m^6+n^6+n^3)+4(n^6+n^3)}{(m^6+n^6+2n^3+1)^2}\\
&<\frac{4(n^3+1)}{m^6+n^6+2n^3+1}\\
&<\frac{4}{n^3+1}.
\end{split}
\end{equation*}
 Hence, it yields that 
$$
\sum_{m,n} (1-|\alpha_{m,n}|)<+\infty.
$$ 

Now, we define the holomorphic function $G:\mathbb H\to \mathbb C$ by
$$
G(w)=
\prod_{n=1}^\infty \prod_{m=-\infty}^\infty\frac{-\alpha_{m,n}}{|\alpha_{m,n}|} B_{\alpha_{m,n}}(z).
$$ 
Then the function $F:=G\circ \psi: \mathbb H\to \Delta$ satisfies $|F(z)|\leq 1$ for every $z\in \mathbb H$. Therefore, the function 
$$
f:=e^{-\frac{e^{i\pi/4}}{\sqrt{z}}}F(z)
$$ 
is holomorphic in $\mathbb H$, continuous in $\overline{\mathbb H}$ and it vanishes to infinite order at $z=0$. Moreover, the origin is only the isolated zero of $f\mid_{\partial\mathbb H}$.
\end{example}
\begin{Acknowlegement} This work was completed when the first author was visiting the Vietnam Institute for Advanced Study in Mathematics (VIASM). He would like to thank the VIASM for financial support and hospitality.  
\end{Acknowlegement}

\end{document}